\documentclass[11pt]{amsart}
\usepackage{graphicx}
\usepackage{commath}
\usepackage{stmaryrd}
\usepackage{xcolor}
\usepackage{bm}
\usepackage{hyperref}
\usepackage{mathabx}
\usepackage{tikz-cd}
\usepackage{enumitem}
\newcommand{\R} {\mathbb {R} }

\newcommand{\Ha}{\mathbb{H}}
\newcommand{\D} {\mathbb {D}}
\newcommand{\Sp} {\mathbb {S}}

\newcommand{\M}{\operatorname {M}}
\newcommand{\TB}{\mathcal {T}}

\newcommand{\K}{\mathcal {K}}

\newcommand{\flow}{{\mathbf{g}}}

\newcommand{\inj}{\mathbf{i}_{\M}}

\newcommand{\len}{\operatorname {l}}

\newcommand{\wh}{\widehat}
\newcommand{\dis}{\operatorname {d}}

\newtheorem{corollary}{Corollary}[section]
\newtheorem{theorem}[corollary]{Theorem}
\newtheorem{lemma}[corollary]{Lemma}

\newtheorem{definition}[corollary]{Definition}

\newtheorem{remark}[corollary]{Remark}

\begin{document}

\title[The Collar Lemma] {Simple geodesics in closed hyperbolic manifolds}

\author[Luo]{Qiliang LUO}

\address{\newline YMSC  \newline Tsinghua University   \newline Beijing, China }

\author[Markovi\'c]{Vladimir Markovi\'c}

\address{\newline YMSC  \newline Tsinghua University   \newline Beijing, China }

\today

\subjclass[2000]{Primary 20H10}

\begin{abstract}  Given any closed hyperbolic n-manifold $\M$ ($n\ge 3$), we show that for a fixed $\kappa>\frac{3}{n-2}$, a random closed geodesic of length  close to $r$  has the collar of width $r^{-\kappa}$ when $r$ is large enough. In particular, most closed geodesics of length close to $r$ are simple. This theorem positively answers Problem 3.16 from Kirby's list \cite{b-k-r} which asks whether every closed hyperbolic 3-manifold contains infinitely many simple closed geodesics. \end{abstract}

\maketitle

\section{Introduction}

Throughout the paper $\M$ denotes a closed hyperbolic $n$-manifold for some $n\ge 3$. By $\Sp^T$ we denote the circle obtained as the quotient of the real line $\R$ by the translation $x\to x+T$, where $T>0$.

\begin{definition} A geodesic on $\M$ is a local isometry $\gamma:\Sp^{\len(\gamma)} \to \M$. Here $\len(\gamma)$ is the length of $\gamma$.  By  $\Gamma$ we denote the collection of  closed geodesics in $\M$.
\end{definition}

\begin{definition} Let $\dis_{\M}$  denote the hyperbolic distance on $\M$.  We say that $\gamma\in \Gamma$ is  $\delta$-collared if the implication
$$
\text{$\dis_{\M}(\gamma(x),\gamma(y))< \delta$ \quad $\implies$ \quad  
$\dis_{\M}(\gamma(x),\gamma(y))=\dis_{\Sp}(x,y)$}
$$ 
holds for every $x,y\in \Sp^{\len(\gamma)}$.  Here $\dis_{\Sp}(x,y)$ denotes the distance between $x$ and $y$ on circle $\Sp^{l(\gamma)}$.
\end{definition}
\begin{remark} A geodesic $\gamma\in \Gamma$ is  $\delta$-collared if and only if the collar of width $\frac{\delta}{2}$ around $\gamma$ is embedded in $\M$.
 \end{remark}

Suppose $0<\epsilon<r$, and $\delta>0$. We let $\Gamma_{r,\epsilon}$ denote the subset of $\Gamma$ consisting of closed geodesics whose length belongs to the interval $(r-\epsilon,r+\epsilon)$. By $\Gamma^\delta_{r,\epsilon}$ we denote the subset of 
$\Gamma_{r,\epsilon}$  consisting of geodesics which are not $\delta$-collared.  Our first main result states that a random geodesic in $\Gamma_{r,\epsilon}$ is $\delta$-collared with high probability for a  suitable $\delta$.

\begin{theorem}\label{thm-main} Suppose $\M$ is a closed hyperbolic n-manifold with $n\ge 3$. Let $\kappa>\frac{3}{n-2}$, and $\epsilon>0$. Then the following holds for $\delta=r^{-\kappa}$ 
$$
\lim_{r\to \infty} \frac{|\Gamma^{\delta}_{r,\epsilon}|}{|\Gamma_{r,\epsilon}|}=0.
$$
\end{theorem}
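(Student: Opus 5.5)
The plan is to turn the count of non-collared geodesics into a count of pairs of closely returning orthogeodesic-type configurations, and to bound that count using exponential mixing of the geodesic flow $\flow_t$ on the unit tangent bundle $\TB^1\M$. By the prime geodesic theorem (Margulis), $|\Gamma_{r,\epsilon}|\asymp \frac{e^{(n-1)r}}{r}$. So it suffices to show that $|\Gamma^\delta_{r,\epsilon}| = o\bigl(e^{(n-1)r}/r\bigr)$ when $\delta=r^{-\kappa}$ and $\kappa>\frac{3}{n-2}$.

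\textbf{Geometric reduction.} Suppose $\gamma\in\Gamma^\delta_{r,\epsilon}$. Then there are parameters $x,y$ with $\dis_\M(\gamma(x),\gamma(y))<\delta$ but $\dis_\M(\gamma(x),\gamma(y))\neq \dis_{\Sp}(x,y)$. Injectivity radius considerations show that, for $\delta$ small, $\dis_{\Sp}(x,y)\ge \inj$, the injectivity radius. Minimizing $\dis_\M(\gamma(x),\gamma(y))$ over nearby pairs, we may take $x,y$ to be a pair where the segment between $\gamma(x)$ and $\gamma(y)$ is orthogonal to $\gamma$, or where the two strands cross. Hence $\gamma$ splits at $x,y$ into two geodesic arcs $\alpha,\beta$ of lengths $a,b$, with $a,b\ge \inj$ and $a+b\approx r$. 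Each arc is homotopic, relative to short connectors of length $<\delta$, to a closed curve. The two tangent vectors $\gamma'(x),\gamma'(y)$ lie in a $\delta$-neighbourhood of each other in position, with arbitrary relative direction, or they may be nearly antipodal. In the frame bundle this says $\gamma'(x)$ returns under $\flow_a$ to a set $B_\delta$: vectors whose basepoint is within $\delta$ of the start, with no constraint on direction. Crucially, $B_\delta$ has Liouville measure $\asymp \delta^{n}$ in the basepoint directions, transverse to the flow direction along $\gamma$ itself.

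\textbf{Counting via mixing.} Parametrize closed geodesics of length near $r$ by vectors $v$ with $\flow_{\ell}v$ close to $v$ for some $\ell\in(r-\epsilon,r+\epsilon)$. Up to the factor $r$ from the choice of basepoint, $|\Gamma_{r,\epsilon}|$ is then measured by the set of $(v,\ell)$ with $\flow_\ell v\in U_\eta(v)$. Counting the bad geodesics amounts to counting triples $(v,a,b)$ with $a+b=\ell$ such that $\flow_a v$ is $\delta$-close in position to $v$ and $\flow_\ell v$ is $\eta$-close to $v$. Summing over $a$ on a grid of mesh $\sim\delta$, I will apply exponential mixing (Moore, Pollicott) twice. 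This is a triple correlation, with times $a$ and $\ell-a$, of smoothed bump functions. It gives that the proportion of such $v$ is $\approx \mu(B_\delta)$ times the closing probability, with error $C\,\delta^{-N}e^{-c\min(a,b)}$ from the Sobolev norms of the bumps.

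Then $|\Gamma^\delta_{r,\epsilon}|/|\Gamma_{r,\epsilon}|\lesssim r\cdot \delta^{n-1}\cdot\delta^{-1}$ for the main terms: one factor $r$ for the location of $x$ along $\gamma$, a further factor $r/\delta$ for the grid in $a$, weighted by $\delta$, and the position measure $\delta^{n-1}$ transverse. The precise bookkeeping should give a bound of the shape $r^{3}\delta^{n-2}$. This tends to $0$ exactly when $\kappa(n-2)>3$, which is the threshold in the statement.

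\textbf{Main obstacle.} The hardest part is the regime where $\min(a,b)$ is small, say $\inj\le a\le C\log r$. There the mixing error $\delta^{-N}e^{-ca}$ is not small. I would treat these short returns separately. A closed geodesic with a short arc $\alpha$ of length $a\le C\log r$ nearly closing up must $\delta$-fellow-travel a closed geodesic of length $\lesssim a$. There are only polynomially many, $e^{(n-1)C\log r}$, such short geodesics. For each one, the set of $v$ passing $\delta$-close to a fixed short closed geodesic in the required way has measure $\lesssim \delta^{n-2}\cdot\mathrm{poly}(r)$. This again uses only a single mixing or equidistribution estimate for the long complementary arc $b\approx r$, for which mixing is effective. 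Choosing the constant $C$ so that the polynomial losses are absorbed, which is where the exponent $3$ is consumed, closes the argument. Making the smoothing of the indicator of $B_\delta$ compatible with the Sobolev loss $\delta^{-N}$ and the exponential gain is the delicate bookkeeping I expect to be the main difficulty.
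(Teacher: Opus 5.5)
\paragraph{Gap 1: the short-return regime.} Your treatment of short returns, which you rightly flag as the crux, fails.

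Your union bound runs over closed geodesics of length at most $C\log r$.
\begin{itemize}
\item It costs a factor $e^{(n-1)C\log r}=r^{(n-1)C}$, times a further polynomial. The only gain is $\delta^{n-2}=r^{-\kappa(n-2)}$, with $\kappa(n-2)$ barely above $3$. So you need $(n-1)C<\kappa(n-2)$, and in particular $C<\kappa$.
\item The long-return regime pulls the other way. The error $\delta^{-N}e^{-cC\log r}=r^{\kappa N-cC}$ is summed over at least $\delta^{-n}$ cells. It must beat a main term that is itself a positive power of $\delta$. This forces at least $cC>\kappa(n+N)$. Since the mixing rate of the geodesic flow satisfies $c\le (n-1)/2$, this gives $C>2\kappa$.
\end{itemize}
No choice of $C$ meets both requirements, so nothing is ``absorbed''.

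The geometric reduction is also false. A near-return whose strands cross at an angle bounded away from $0$ closes up to a loop with a corner. Such a loop does not $\delta$-fellow-travel any closed geodesic.

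What is missing is the exponential weight attached to each loop:
\begin{itemize}
\item The vectors that $c\delta$-shadow a geodesic loop of length $a$ form a Bowen ball of measure at most $C\delta^{2n-1}e^{-(n-1)a}$ (Lemma~\ref{lemma-proba}).
\item Sullivan's orbit count gives $\sum_{\len(\gamma)\le R}e^{-(n-1)\len(\gamma)}\le CR$ over pointed loops at a point (Lemma~\ref{lemma-proba-1}).
\end{itemize}
The paper covers by these Bowen balls, over pointed loops based at a $\delta$-net. Pointed loops may have corners, so no fellow-travelling claim is needed. This gives $\Lambda(\mathcal L^\delta_R)\le C\delta^{-n}\delta^{2n-1}R=C\delta^{n-1}R$ for all return lengths at once (Lemma~\ref{cor: L-estimate}). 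There is no mixing and no short/long split.

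\paragraph{Gap 2: the measure-to-count step.} You never prove a lower bound for the measure a single bad geodesic contributes. With your set-up, the near-return sits at $v$ itself and closing is imposed at a fixed scale $\eta$; the bound is then too weak.
\begin{itemize}
\item Vectors in the $\eta$-closing neighbourhood of $\gamma$ near $\gamma'(x)$ have basepoints displaced by up to $\eta\gg\delta$. Only a proportion of order $(\delta/\eta)^{n-1}$ of them keep a $\delta$-near-return.
\item The per-geodesic measure is then roughly $\delta^{n-1}\eta^{n}e^{-(n-1)r}$, against a main term of roughly $r\delta^{n-1}\eta^{2n-2}$.
\item The powers of $\delta$ cancel. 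The resulting proportion bound, of order $r^2\eta^{n-2}$, does not see $\delta$ at all. In effect you are counting $\eta$-non-collared geodesics.
\end{itemize}
Closing at scale $\eta\approx\delta$ would repair this, at the cost of $\delta$-scale bumps in all three factors.

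The paper's remedy is the choice of base point. It takes $v$ tangent to $\gamma$ at the midpoint of the longer arc, so the near-intersection times lie in $[r/10,9r/10]$. There every vector of the $\epsilon$-Bowen ball $B_r(v,\epsilon)$ stays within $O(e^{-r/10})\ll\delta$ of the orbit (Lemma~\ref{lemma-nena}). The consequences are:
\begin{itemize}
\item Each bad geodesic contributes measure at least $Ce^{-(n-1)r}$, independent of $\delta$ (Lemma~\ref{lem: measure}).
\item The closing condition can therefore be dropped from the upper bound entirely.
\item The shifts by $k\delta$ give $\Lambda(\K^\delta_{r,\epsilon})\le (r/\delta)\Lambda(\mathcal L^{2\delta}_r)\le C\delta^{n-2}r^2$.
\item Margulis' bound $|\Gamma_{r,\epsilon}|\ge Ce^{(n-1)r}/r$ supplies the third power of $r$.
\end{itemize}
Your own bookkeeping, in which $r\cdot\delta^{n-1}\cdot\delta^{-1}$ ``should give'' $r^3\delta^{n-2}$, does not derive the exponent.
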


A closed geodesic $\gamma\in \Gamma$ is said to be simple if  it has no self intersections. In particular, such $\gamma$ is a prime geodesic. Note that $\gamma$ is simple if and only if it is $\delta$-collared for some $\delta>0$. The following theorem is an immediate consequence of Theorem \ref{thm-main}.

\begin{theorem}\label{thm-main-1} Every closed hyperbolic manifold contains infinitely many simple closed geodesics.
\end{theorem}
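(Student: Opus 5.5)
Theorem \ref{thm-main-1} will be deduced directly from Theorem \ref{thm-main}; the only work is to rule out that the simple geodesics it produces repeat. Fix $n\ge 3$ and a closed hyperbolic $n$-manifold $\M$. Choose $\kappa>\frac{3}{n-2}$, for instance $\kappa=\frac{3}{n-2}+1$, and $\epsilon=1$, and put $\delta(r)=r^{-\kappa}$. The argument has two steps: show that $\Gamma_{r,1}$ is nonempty for every large $r$, and then use Theorem \ref{thm-main} to find a $\delta(r)$-collared geodesic in it.

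\textbf{Step 1.} By the prime geodesic theorem for closed hyperbolic manifolds (Margulis), the number of closed geodesics of length at most $r$ grows like $e^{(n-1)r}/((n-1)r)$. Hence $|\Gamma_{r,1}|\to\infty$, and in particular $\Gamma_{r,1}\neq\emptyset$ for all large $r$. This also follows from whatever counting and equidistribution input the paper uses to prove Theorem \ref{thm-main}, since the ratio there is only meaningful when the denominator is nonzero.

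\textbf{Step 2.} By Theorem \ref{thm-main}, there is $r_0$ such that for $r\ge r_0$ we have $|\Gamma^{\delta(r)}_{r,1}|<|\Gamma_{r,1}|$. So for each such $r$ there is some $\gamma_r\in\Gamma_{r,1}\setminus\Gamma^{\delta(r)}_{r,1}$, i.e.\ $\gamma_r$ is $\delta(r)$-collared. By the remark after the definition of being collared, a $\delta$-collared geodesic with $\delta>0$ is simple. We can also check this directly: if $\gamma_r(x)=\gamma_r(y)$ with $x\neq y$, then $\dis_{\M}(\gamma_r(x),\gamma_r(y))=0<\delta$. Collaredness then forces $\dis_{\Sp}(x,y)=0$, a contradiction. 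The same argument applied to $\gamma_r(x)=\gamma_r(x+\len(\gamma_r)/k)$ shows that $\gamma_r$ is not a multiple cover of a shorter geodesic, so $\gamma_r$ is prime.

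\textbf{Step 3.} Take the sequence $r_k=r_0+3k$. The length intervals $(r_k-1,r_k+1)$ are pairwise disjoint, so the geodesics $\gamma_{r_k}$ have distinct lengths. They are therefore pairwise distinct simple closed geodesics, and there are infinitely many of them.

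The main (and only nontrivial) point beyond Theorem \ref{thm-main} is the nonemptiness of $\Gamma_{r,\epsilon}$ in Step 1, which is supplied by the standard counting asymptotics for closed geodesics on closed negatively curved manifolds.
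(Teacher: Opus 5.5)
Your argument is correct and is exactly the deduction the paper means when it calls Theorem \ref{thm-main-1} an immediate consequence of Theorem \ref{thm-main}. Nonemptiness of $\Gamma_{r,\epsilon}$ comes from Margulis' count, which the paper also invokes in the proof of Theorem \ref{thm-main}; being $\delta$-collared with $\delta>0$ forces the geodesic to be simple; and disjoint length windows give infinitely many distinct geodesics. One cosmetic point: the paper's proof of Theorem \ref{thm-main} fixes $0<\epsilon<\inj$, so taking $\epsilon<\min\{\inj,1\}$ instead of $\epsilon=1$ keeps you strictly within what is actually proved, and your Step 3 goes through unchanged.
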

\begin{remark} Theorem \ref{thm-main} implies that a ''random" closed geodesic  in $\M$ is simple when the dimension of $\M$ is at least three. When $\M$ is a closed hyperbolic surface a random closed geodesic is not simple. However, in this case there are infinitely many simple closed geodesics on $\M$ as well. 
\end{remark}
This theorem positively answers Problem 3.16 from Kirby's list \cite{b-k-r} which asks whether every closed hyperbolic 3-manifold contains infinitely many simple closed geodesics (Kuhlmann \cite{kuhlmann} previously showed  this to be true for cusped hyperbolic 3-manifolds).

\subsection{Notation and conventions} In the remainder of this paper we fix $n\ge 3$, and a closed hyperbolic $n$-manifold $M$. It is important to stress that unless otherwise stated all constants in this paper depend only on $\M$.

\subsection{Tight unit vectors}
	 Denote by $\TB=T^1M$ the unit tangent bundle over $M$ and by $\pi : \TB\to M$ the corresponding projection. Consider the Sasaki Riemannian metric on $\TB$, and let $\dis_{\TB}$ denote the induced distance function on $\TB$. By $\Lambda$ we denote the Liouville (probability) measure on $\TB$. Let $\flow_r: \TB\to \TB$ be the geodesic flow. Then $\flow_r$ is a measure preserving diffeomorphism of the measure space $(\TB, \Lambda)$. Let $\inj$ denote the injectivity radius of $M$.

	\begin{definition}
		Let $r,\epsilon,\delta>0$. We say that $v\in\TB$ is $(r,\epsilon,\delta)$-tight if
		\[
		\dis_{\mathcal T}(v,\mathbf g_r(v))<\epsilon,
		\]
		and 
		 \[
		\dis_{\M}\left(\pi\circ \mathbf g_s(v),\pi\circ\mathbf g_t(v)\right)< \delta,
		\]
		for some $s,t\in [\tfrac{r}{10},\tfrac {9r}{10}]$ with $|s-t|\geq\mathbf i_{\M}$. The set of $(r,\epsilon,\delta)$-tight vectors is denoted by $\K^\delta_{r,\epsilon}$.
	\end{definition}
	
\begin{remark} If a vector $v\in \TB$ is tight then the geodesic segment of length $r$, with the initial tangent vector $v$, is not $\delta$-collared. This is closely related to the notion of non-collared closed geodesics in $\Gamma$ as we explain below.
\end{remark}	
	We show that the set of non-collared closed geodesics is statistically negligible by showing that the  same holds for the set $\K^\delta_{r,\epsilon}$. We do this by bounding above the measure of $\K^\delta_{r,\epsilon}$ using the Anosov properties of the flow $\mathbf g_r$, and basic hyperbolic geometry. The bulk of the paper is devoted to proving the following theorem.
	\begin{theorem}
	\label{thm: measure}
	There exist constants $C,\delta_0>0$ such that the  estimate
		\[
		\Lambda\left(\K^\delta_{r,\epsilon}\right)\leq C\delta^{n-2}r^2
		\]
		holds for every $r\geq 10\inj$,  every $0<\delta \le \delta_0$, and every $\epsilon>0$.
	\end{theorem}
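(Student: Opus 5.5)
The bound does not use the closing condition $\dis_{\TB}(v,\flow_r(v))<\epsilon$ at all, which is why $C$ is independent of $\epsilon$. I will prove the stronger estimate that the set
\[
B^\delta_r=\{v\in\TB:\ \exists\, s,t\in[\tfrac r{10},\tfrac{9r}{10}],\ |s-t|\ge\inj,\ \dis_{\M}(\pi\flow_s v,\pi\flow_t v)<\delta\}
\]
has measure at most $C\delta^{n-2}r^2$. The plan has three parts: discretize the first time $s$, use invariance of $\Lambda$ under $\flow$ to reduce to a single-time return set, and bound that return set by a lattice-point count in $\Ha^n$.

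\emph{Step 1: discretization in $s$.} Assume $s<t$ (the case $t<s$ is symmetric), and let $s'$ be a point of the grid $\delta\mathbb Z\cap[0,r]$ with $|s-s'|\le\delta$. Put $t'=t+(s'-s)$. Moving along a unit-speed geodesic by time at most $\delta$ moves the base point by at most $\delta$, so $\dis_{\M}(\pi\flow_{s'}v,\pi\flow_{t'}v)<3\delta$, with $u=t'-s'=t-s\in[\inj,r]$. Define
\[
A_{\eta}=\{w\in\TB:\ \exists\, u\in[\inj,r],\ \dis_{\M}(\pi w,\pi\flow_u w)<\eta\}.
\]
Then $B^\delta_r\subset\bigcup_{s'}\flow_{s'}^{-1}(A_{3\delta})$. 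Since $\flow$ preserves $\Lambda$, this gives $\Lambda(B^\delta_r)\le (r/\delta+1)\,\Lambda(A_{3\delta})$. It therefore suffices to show $\Lambda(A_\eta)\le C\eta^{n-1}r$ for all small $\eta$.

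\emph{Step 2: fiberwise reduction.} Write $\Lambda$ locally as a product: normalized volume on $\M$ times normalized round measure on the fibers $\Sp^{n-1}$. Fix $x\in\M$ and a lift $\tilde x\in\Ha^n$, and let $G=\pi_1(\M)$ act by deck transformations. A unit vector $w$ at $x$ lies in $A_\eta$ exactly when the geodesic ray from $\tilde x$ in direction $\tilde w$, at some time $u\in[\inj,r]$, comes within $\eta$ of $g\tilde x$ for some $g\in G$.

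The element $g=\mathrm{id}$ never contributes. The ray is a geodesic, so its distance from $\tilde x$ at time $u$ equals $u\ge\inj>\eta$ (taking $\delta_0<\inj/3$).

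For $g\ne\mathrm{id}$ put $D=\dis(\tilde x,g\tilde x)$. Then $D\ge 2\inj$, and only $g$ with $D\le r+1$ matter. The set of directions whose ray meets the ball $B(g\tilde x,\eta)$ is a spherical cap of angular radius $\theta$ with $\sinh\eta=\sinh D\,\sin\theta$ (hyperbolic right-triangle trigonometry). Its measure is therefore at most $C(\eta/\sinh D)^{n-1}\le C'\eta^{n-1}e^{-(n-1)D}$, uniformly for $D\ge 2\inj$.

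\emph{Step 3: counting and summing.} Because $\M$ is compact, the standard orbit-counting bound gives $\#\{g:\ D\in[k,k+1]\}\le Ce^{(n-1)k}$, uniformly in $x$. Summing over $k=0,\dots,\lceil r\rceil$ yields a fiber measure of at most $C\eta^{n-1}(r+1)$. Integrating over $x\in\M$ then gives $\Lambda(A_\eta)\le C\eta^{n-1}r$ for $r\ge 10\inj$.

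Combining with Step 1 (taking $\eta=3\delta$):
\[
\Lambda(\K^\delta_{r,\epsilon})\le\Lambda(B^\delta_r)\le \frac{2r}{\delta}\cdot C\,(3\delta)^{n-1}r=C''\delta^{n-2}r^2 .
\]

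The main point needing care is Step 1. Discretizing only $s$, at mesh $\delta$, costs the factor $r/\delta$. The return time $u$ must not be discretized as well, which would cost an extra $1/\delta$; this is why $A_\eta$ allows any return time $u$ in a whole interval. The hyperbolic geometry absorbs the integration over $u$: each ball $B(g\tilde x,\eta)$ is hit by a cap of directions regardless of the time at which the hit happens. The remaining checks are routine: the uniform cap estimate and lattice count (constants depend only on $\M$), and the comparison between the Sasaki-metric Liouville measure and the product measure.
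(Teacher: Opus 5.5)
Your proof is correct. Its first step is the paper's own reduction: discretize only the first time $s$ at mesh $\delta$, use $\flow$-invariance of $\Lambda$ to pay a factor $r/\delta$, and reduce to a single-time return set. That set is your $A_{3\delta}$ and the paper's $\mathcal L^{2\delta}_r$; the paper shifts only $s$ rather than both times, which is immaterial.

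You depart from the paper in how you bound that return set by $C\eta^{n-1}r$, which is the content of Lemma \ref{cor: L-estimate}. The paper works globally on $\TB$:
\begin{enumerate}
\item It takes a $\delta$-net $\mathcal P$ with $|\mathcal P|\le C\delta^{-n}$.
\item Using the fellow-traveller Sasaki estimates (Lemmas \ref{lem: dynamical nbhd-1} and \ref{lem: dynamical nbhd}), it covers $\mathcal L^\delta_r$ by Bowen balls of radius $c\delta$. These are centred at initial vectors of pointed geodesic loops at net points, of length at most $2r$.
\item It bounds each Bowen ball by $C\delta^{2n-1}e^{-(n-1)\len(\gamma)}$ (Lemma \ref{lemma-mera}).
\item It sums over loops with Sullivan's counting to get $C\delta^{2n-1}r$ per net point, then multiplies by $|\mathcal P|$.
\end{enumerate}
You instead disintegrate $\Lambda$ over the base and compute the fibre measure directly. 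The bad directions at $x$ are a union, over deck transformations, of spherical caps with $\sin\theta=\sinh\eta/\sinh D$, and a packing count of the orbit finishes.

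Your route is shorter and more elementary. It needs no net, no Sasaki-distance comparison and no Bowen-ball volume lemma, and uniformity in $x$ is automatic. It also makes the exponent $n-1$ transparent as the size of a codimension-$(n-1)$ cap of directions; the paper reaches the same power less directly through the bookkeeping $\delta^{2n-1}\cdot\delta^{-n}$. The paper's route, in exchange, stays in dynamical language and reuses the Bowen-ball machinery it already needs for the lower bound in Section 2.
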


	\subsection{Non-collared geodesics}
	
	The relationship between  non-collared closed geodesics and  tight vectors is built through the map $\mathcal J$ defined as follows. 
	
\begin{definition} Suppose $0<\epsilon<\inj$, and let  $r>0$. By $\K_{r,\epsilon}\subset \TB$ we denote the unit vectors $v$ such that $\dis_\TB(v,\flow_r(v))<\epsilon$. 
\end{definition} 	

For $v\in \K_{r,\epsilon}$ let $\alpha_r(v)$ be the length $r$ geodesic arc with the initial vector $v$. Since the endpoints of $\alpha_r(v)$ are at the distance at most $\epsilon$ there exists the unique shortest geodesic arc  in $\M$ joining these two endpoints.  Concatenating $\alpha$ with this shortest arc  forms the closed piecewise geodesic curve which we denote by $P_r(v) $. 

\begin{lemma}	There exists $\epsilon_0,r_0>0$ so that assuming $\epsilon\le \epsilon_0$, and $r\ge r_0$,  for every $v\in \K_{r,\epsilon}$ the piecewise geodesic curve $P_r(v)$ is homotopic to a closed geodesic $\mathcal P_{r,\epsilon}$ in $\M$.   This defines the map $\mathcal P_{r,\epsilon}:\K_{r,\epsilon}\to \Gamma$.
\end{lemma}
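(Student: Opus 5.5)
The plan is to show that the closed piecewise geodesic curve $P_r(v)$ is homotopically nontrivial; the closed geodesic is then supplied by standard theory. In a closed hyperbolic manifold every nontrivial free homotopy class of loops contains a unique closed geodesic. The reason is that $\pi_1(\M)$ acts on $\Ha^n$ by loxodromic isometries, every nontrivial element being loxodromic since $\M$ is compact. The projection of the axis of such an element is the closed geodesic, and uniqueness follows from negative curvature.

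To prove nontriviality, lift $P_r(v)$ to $\Ha^n$. Let $\tilde\alpha$ be a lift of $\alpha_r(v)$, running from $\tilde x$ to $\tilde y$ with $\dis_{\Ha}(\tilde x,\tilde y)=r$. The closing arc has length less than $\epsilon$, because the distance in $\M$ between the endpoints is at most $\dis_\TB(v,\flow_r(v))<\epsilon$ (the Sasaki metric dominates the base metric under $\pi$). Its lift therefore ends at a point $g\tilde x$ with $\dis_{\Ha}(\tilde y, g\tilde x)<\epsilon$, where $g\in\pi_1(\M)$ is the deck transformation representing $P_r(v)$. By the triangle inequality,
\[
\dis_{\Ha}(\tilde x, g\tilde x)\ge r-\epsilon>0 ,
\]
and this is positive as soon as $r_0>\epsilon_0$. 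Hence $g\neq 1$, so $P_r(v)$ is not null-homotopic. It follows that $P_r(v)$ is freely homotopic to a unique closed geodesic, which we call $\mathcal P_{r,\epsilon}(v)$. The closed geodesic so obtained is well defined, since the shortest closing arc is unique when $\epsilon<\inj$.

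Beyond nontriviality, one would also want $\mathcal P_{r,\epsilon}(v)$ to be close to $\alpha_r(v)$, with length near $r$, and this needs $\epsilon_0$ small. The natural tool is the standard lemma on broken geodesics in $\Ha^n$. A concatenation of long geodesic segments (here the segment of length $r$) whose break angles are close to $\pi$ is a quasigeodesic uniformly close to a true geodesic. The tangent vectors at the junction differ by less than $\epsilon$ in the Sasaki metric. So the angle between the end of $\alpha$ and the start of $\alpha$, after parallel transport along the short arc, is $O(\epsilon)$. This lets us build the bi-infinite $g$-invariant broken geodesic $\bigcup_k g^k(\tilde\alpha\cup\text{short arc})$. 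It is a $(1+O(\epsilon),O(\epsilon))$-quasigeodesic when $r\ge r_0$, and hence lies within $O(\epsilon)$ of the axis of $g$.

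The main obstacle is this quantitative closeness. It is not needed for mere existence, but it is what makes the map useful later. The delicate point is controlling the angle of the short arc relative to $\alpha$: the short arc itself may point in an arbitrary direction, and its length is only $O(\epsilon)$. One should treat the short arc as a negligible perturbation and compare $\tilde x, g\tilde x, g^2\tilde x,\dots$ directly with the axis. This comparison uses the hyperbolic estimate that a path of long segments with small bending stays near its chord.
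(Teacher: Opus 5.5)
Your proof is correct, but it takes a different route from the paper. The paper's proof is one line. It cites the Chain Lemma of Kahn--Markovi\'c: for $\epsilon$ small and $r$ large, $P_r(v)$ is a quasigeodesic, and hence homotopic to a closed geodesic.

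You get nontriviality directly from the triangle inequality in the universal cover, $\dis_{\Ha^n}(\tilde x, g\tilde x)\ge r-\epsilon>0$. You then use the standard fact that every nontrivial free homotopy class in a closed hyperbolic manifold contains a unique closed geodesic, since all nontrivial deck transformations are loxodromic. This is more elementary and avoids any citation. It also shows the statement holds for every $\epsilon<\inj$ and every $r>\epsilon$. The smallness of $\epsilon_0$ and the largeness of $r_0$ are needed only for the quasigeodesic route.

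What the paper's route gives in exchange is geometric control. The resulting closed geodesic fellow-travels $\alpha_r(v)$ and has length close to $r$. Your displacement bound alone does not give this, because the translation length of $g$ can be much smaller than its displacement at $\tilde x$ when $\tilde x$ is far from the axis. As you observe, this control is not required by the statement itself. Your sketch of it in the last two paragraphs is essentially the Chain Lemma argument, including the correct warning that the short closing arc may point in any direction. It must therefore be handled through closeness of the tangent vectors $v$ and $\flow_r(v)$ rather than through bending angles at the junctions.
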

\begin{proof} When $\epsilon$ is small enough, for every large $r$ the curve $P_r(v) $ is a quasigeodesic (see the Chain Lemma in \cite{k-m})  and hence it is homotopic to a closed geodesic in $\M$.
\end{proof}		

The restriction of $\mathcal P_{r,\epsilon}$ to the set  $\K^\delta_{r,\epsilon}$ is denoted by $\mathcal J^\delta_{r,\epsilon}$ (although the definition of $\mathcal J^\delta_{r,\epsilon}$ does not depend on $\delta$, its domain does depend on $\delta$).
The significance  of the map $\mathcal J^\delta_{r,\epsilon}$ comes from Lemma \ref{lem: measure}. If $\gamma$ is a non-collared geodesic, then there are many tight vectors mapping to $\gamma$. This is quantified by the following lemma. For $\gamma\in \Gamma$, we let 
	\[
	\K^\delta_{r,\epsilon}(\gamma)=\mathcal (J^\delta_{r,\epsilon})^{-1}(\gamma)
	\] 
	denote the set of tight vectors mapping to $\gamma$.  
	\begin{lemma}
	\label{lem: measure}
		Let $\kappa,\epsilon>0$. Then there exist constants $r_0,C>0$ such that  for any $\gamma\in \Gamma^\delta_{r,\epsilon}$ the estimate 
		\[\Lambda\left(\K^{3\delta}_{r,3\epsilon}(\gamma)\right)\geq Ce^{-(n-1)r}\]
		holds when $r\geq r_0$, and  $\delta= r^{-\kappa}$.
	\end{lemma}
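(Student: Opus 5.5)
Fix $\gamma\in\Gamma^\delta_{r,\epsilon}$ with $\delta=r^{-\kappa}$. The plan is to exhibit an explicit open set of vectors near the tangent vectors of $\gamma$ that lie in $\K^{3\delta}_{r,3\epsilon}(\gamma)$, and to show that its Liouville measure is at least $Ce^{-(n-1)r}$. Since $\gamma$ is not $\delta$-collared, there are $x,y\in\Sp^{\len(\gamma)}$ with $\dis_\M(\gamma(x),\gamma(y))<\delta$ but $\dis_\M(\gamma(x),\gamma(y))\neq\dis_\Sp(x,y)$. Because the injectivity radius is $\inj$ and $\delta$ is small, this forces $\dis_\Sp(x,y)\ge \inj$, up to harmless constants. (Two points of $\gamma$ at circle distance less than $\inj$ realize their distance along $\gamma$.) We choose a starting parameter $x_0$ on the circle so that both $x-x_0$ and $y-x_0$, taken mod $\len(\gamma)$, lie in $[\tfrac{r}{5},\tfrac{4r}{5}]$. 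This is possible since $\len(\gamma)\approx r$: the admissible $x_0$ form an arc of length comparable to $r$ (for instance, put $x_0$ roughly at distance $r/4$ before the earlier of $x,y$, after relabelling so that the arc from $x$ to $y$ has length at most $\len(\gamma)/2$). Let $v_0=\gamma'(x_0)$. Then $\dis_\TB(v_0,\flow_r(v_0))\le|\len(\gamma)-r|<\epsilon$, and $v_0$ is $(r,\epsilon,\delta)$-tight with room to spare.

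Next we perturb $v_0$ in a small neighbourhood $U$ defined by flow-box coordinates (stable, unstable and flow directions). We take $v$ within distance $\eta$ of $v_0$ in the stable and flow directions, and within $\eta e^{-r}$ in the unstable directions, where $\eta$ is a small constant depending on $\epsilon$ and $\M$. Anosov/hyperbolic geometry estimates (in $\Ha^n$, geodesics starting $\eta$-close and ending $\eta e^{-r}$-close in the unstable coordinate fellowtravel up to time $r$) give $\dis_\M(\pi\flow_t(v),\gamma(x_0+t))\le C'\eta\, e^{-\min(t,r-t)}+C'\eta e^{-r}$ for $t\in[0,r]$. For $t\in[\tfrac r{10},\tfrac{9r}{10}]$ this is $O(e^{-r/10})\ll\delta=r^{-\kappa}$ once $r\ge r_0$. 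The polynomial $\delta$ versus exponential closeness is what makes the choice $\delta=r^{-\kappa}$ work. Hence $\dis_\M(\pi\flow_s v,\pi\flow_t v)<3\delta$ with $s=x-x_0$ and $t=y-x_0$, and $|s-t|\ge\inj$. Moreover $\dis_\TB(v,\flow_r v)\le\dis(v,v_0)+\dis(v_0,\flow_rv_0)+\dis(\flow_rv_0,\flow_rv)<3\epsilon$ when $\eta$ is small. So $v\in\K^{3\delta}_{r,3\epsilon}$.

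It remains to check that $\mathcal P_{r,3\epsilon}(v)=\gamma$. The broken curve $P_r(v)$ stays uniformly close to $\gamma$ (within $O(\eta)$ plus the closing arc of length less than $3\epsilon$), and is therefore freely homotopic to $\gamma$ through a thin homotopy. Both curves are quasigeodesics by the Chain Lemma, so they determine the same closed geodesic. This requires $\epsilon$ below the threshold $\epsilon_0$ of the previous lemma; for larger $\epsilon$ we would shrink $\eta$ and argue on the lift to $\Ha^n$ directly. The measure estimate is $\Lambda(U)\asymp \eta\cdot\eta^{n-1}\cdot(\eta e^{-r})^{n-1}$, because the Liouville measure is locally comparable to the product of Lebesgue measures in the flow, stable and unstable coordinates, each of dimension $1$, $n-1$ and $n-1$. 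This gives $\Lambda\ge Ce^{-(n-1)r}$.

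The main obstacle I expect is making the fellow-travelling estimate uniform and rigorous. One needs that the hyperbolic shadowing constants do not deteriorate, which should follow from working in the universal cover $\Ha^n$ with explicit formulas. One also needs to handle the identification of the resulting closed geodesic with $\gamma$ rather than a multiple or a different class, and the case $\dis_\Sp(x,y)$ small (ruled out via $\inj$).
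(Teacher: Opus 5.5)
Your overall route is the paper's. You take the tangent vector to $\gamma$ at a point placed so that the two nearly coincident parameters fall well inside $[\tfrac r{10},\tfrac{9r}{10}]$; the paper uses the midpoint of the longer arc between $x$ and $y$, which is equivalent to your choice. You show that a Bowen-ball-shaped neighbourhood of it lies in $\K^{3\delta}_{r,3\epsilon}(\gamma)$, because the exponential shadowing error $e^{-r/10}$ is negligible against $\delta=r^{-\kappa}$. You conclude with the Bowen ball volume of Lemma~\ref{lemma-mera}.

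One step fails as written. Your shadowing bound $\dis_\M(\pi\flow_t(v),\gamma(x_0+t))\le C'\eta e^{-\min(t,r-t)}+C'\eta e^{-r}$ is false for the component of the perturbation along the flow. For $v=\flow_c(v_0)$ with $0<|c|<\eta$, the left side equals $|c|$ for every $t$ and does not decay. So with the fixed times $s=x-x_0$, $t=y-x_0$ you only get $\pi\flow_s(v)\approx\gamma(x+c)$ and $\pi\flow_t(v)\approx\gamma(y+c)$. When the two strands of $\gamma$ through $\gamma(x)$ and $\gamma(y)$ cross at an angle bounded away from $0$, $\dis_\M(\gamma(x+c),\gamma(y+c))$ is in general of order $|c|$. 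Since $\eta$ is a fixed constant, this is much larger than $3\delta=3r^{-\kappa}$. Shrinking the flow width of $U$ to $\delta$ would rescue the inclusion, but it costs a factor $r^{-\kappa}$ in $\Lambda(U)$, so the lemma as stated would not follow.

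The repair is the one the paper builds in. Write $v\in U$ as lying in the stable--unstable slice through $v_c=\flow_c(v_0)$ for some $|c|<\eta$. Shadow $v_c$ instead of $v_0$: this is Lemma~\ref{lemma-dara} with $q=\tfrac1{10}$, giving $\dis_\TB(\flow_\tau(v),\flow_\tau(v_c))\le Ce^{-r/10}$ for $\tfrac r{10}<\tau<\tfrac{9r}{10}$. Then use the shifted times $s-c$ and $t-c$.
\begin{itemize}
\item Thanks to your margin $[\tfrac r5,\tfrac{4r}5]$, these still lie in $(\tfrac r{10},\tfrac{9r}{10})$ for large $r$.
\item They still differ by $|s-t|\ge\inj$.
\item They satisfy $\pi\flow_{s-c}(v_c)=\gamma(x)$ and $\pi\flow_{t-c}(v_c)=\gamma(y)$.
\end{itemize}
Hence $\dis_\M(\pi\flow_{s-c}(v),\pi\flow_{t-c}(v))<\delta+2Ce^{-r/10}<3\delta$ for $r\ge r_0$. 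This is exactly why Lemma~\ref{lemma-nena} assumes that $\flow_c(v)$ is tight for every $c\in(-\epsilon,\epsilon)$, not only $v$ itself. With this change the rest of your argument goes through: the closing estimate $\dis_\TB(v,\flow_r(v))<3\epsilon$, the thin homotopy identifying $\mathcal P_{r,3\epsilon}(v)$ with $\gamma$, and the product-measure count.
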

	
Combining the lower bound on $\Lambda\left(\K^{3\delta}_{r,3\epsilon}(\gamma)\right)$ with the upper bound on $\Lambda\left(\K^{3\delta}_{r,3\epsilon}\right)$, together with the Margulis' estimate on $|\Gamma_{r,\epsilon}|$, yields the proof of the main theorem. This is done in the next subsection.

	 \subsection{Proof of Theorem \ref{thm-main}} Fix $0<\epsilon<\inj$, and let $\delta=r^{-\kappa}$. It suffices to prove that 	 
	\[
	\lim_{r\to\infty} \frac{|\Gamma^\delta_{r,\epsilon}|}{|\Gamma_{r,\epsilon}|}=0.
	\]  
	 Let $C_1>0$ be the constant from Lemma \ref{lem: measure}. Then according to the same lemma, for any geodesic $\gamma\in \Gamma^\delta_{r,\epsilon}$ the inequality
	\[
	\Lambda\left(\K^{3\delta}_{r,3\epsilon}(\gamma)\right)\geq C_1e^{-(n-1)r},
	\]
	holds.  This implies
	\[
	\Lambda(\K^{3\delta}_{r,3\epsilon})\geq\sum_{\gamma\in\Gamma^\delta_{r,\epsilon}}\Lambda\left(\K^{3\delta}_{r,3\epsilon}(\gamma)\right)\geq C_1e^{-(n-1)r}|\Gamma^\delta_{r,\epsilon}|.
	\]
	Combining the above estimate with Theorem \ref{thm: measure}, we obtain
	\begin{equation}\label{eq-prevara}
	C_1|\Gamma^\delta_{r,\epsilon}|\le C_2 e^{(n-1)r}\delta^{n-2}r^2,
	\end{equation}
where  $C_2>0$ is the constant from Theorem \ref{thm: measure}. 

On the other hand,   it follows from the classical theorem by  Margulis \cite{margulis} that
	\[
|\Gamma_{r,\epsilon}|\ge C_3 \frac{e^{(n-1)r}}{r}
	\] for some constant $C_3>0$ independent of $r$ (the constant $C_3$ depends on $\epsilon$ though). Together with (\ref{eq-prevara}) this yields
$$
	  \frac{|\Gamma^\delta_{r,\epsilon}|}{|\Gamma_{r,\epsilon}|}\le C\delta^{n-2}r^3,
$$
where $C>0$ is a constant independent of $r$. Replacing $\delta=r^{-\kappa}$ in the above probability estimate, we obtain the upper bound 
$$
	  \frac{|\Gamma^\delta_{r,\epsilon}|}{|\Gamma_{r,\epsilon}|}\le Cr^{(3-(n-2)\kappa)}.
$$	
Since $\kappa>\tfrac{3}{n-2}$, this upper bound tends to zero as $r\to\infty$, which proves the theorem.

	\subsection{Outline and organization}	 
	 We prove Lemma \ref{lem: measure} in Section 2. The proof is based on showing that the 
set $\K^{3\delta}_{r,3\epsilon}(\gamma)$ contains a suitable Bowen ball which  is a dynamically defined neighbourhood of a unit vector tangent to $\gamma$. The lower bound in   Lemma \ref{lem: measure} then steams from the standard computation of the volume of a Bowen ball.

We prove Theorem \ref{thm: measure} in Sections 3,4, and 5. In Section 3 we define the set $\mathcal L^\delta_r\subset \TB$ and show that the total measure of $\mathcal K^\delta_{r,\epsilon}$ is in a suitable sense dominated by the total  measure of $\mathcal L^\delta_r$. The set $\mathcal L^\delta_r$ eliminates the dependence on $\epsilon$ and is much simpler to work with than  $\mathcal K^\delta_{r,\epsilon}$.

In Section 4 we recall some standard estimates from hyperbolic geometry, and the orbit counting estimate on the universal cover of a hyperbolic manifold. In the final Section 5 we finally compute the upper bound on the total measure of $\mathcal L^\delta_r$.

\section{Tight vectors near non-collared geodesic}

This section is devoted to proving Lemma \ref{lem: measure} which provides a lower bound on the measure of tight vectors near a non-collared geodesic $\gamma$. To do so, we find a single tight vector near $\gamma$ and study its \emph{dynamical neighbourhood} called the Bowen ball. The proof of Lemma \ref{lem: measure} is  completed in Subsection \ref{sec: proof lemma}.

\subsection{Dynamical neighbourhoods}
The geodesic flow $\flow_t: \TB \to \TB$ defines a smooth $\R$-action on the manifold $\TB$. 
Because $\M$ has constant negative sectional curvature the flow is Anosov, and the manifold $\TB$ is equipped with three mutually transverse $\flow_t$-invariant foliations. Suppose that $W^s$ is a local strong stable leaf, $W^u$ a local strong unstable leaf, and $W^c$ a central flow leaf. Then there exists a constant $C>0$ such that  
\begin{itemize}
 \item  For any $v_1, v_2 \in W^s$, and all $t \ge 0$, we have
$$
\dis_{\TB}(\flow_t(v_1), \flow_t(v_2)) \le C e^{-t} \dis_\TB(v_1, v_2).
$$
\vskip .1cm
\item For any $v_1, v_2 \in W^u$, and all $t \ge 0$, we have
$$
\dis_\TB(\flow_t(v_1), \flow_t(v_2)) \ge C^{-1} e^t \dis_\TB(v_1, v_2).
$$
\vskip .1cm
\item If $v_1, v_2\in W^c$  then $v_2=\flow_s(v_1)$ for some $s$. 
\end{itemize}
\begin{definition}\label{def-bowen} The Bowen ball $B_r(v, \epsilon)$ consists of all vectors $u\in \TB$ such that  
$$
\dis_\TB(\flow_t(u),\,  \flow_t(v)) < \epsilon,
$$ 
for all $t \in [0, r]$. We call $B_r(v,\epsilon)$ the $(r,\epsilon)$-Bowen ball centred at $v$.
\end{definition}
Due to the leaf dynamics, it forms a product cylinder
$$
B_r(v, \epsilon) \cong \D^{n-1}_{\epsilon e^{-r}}(W^u) \times \D^{n-1}_{\epsilon}(W^s) \times (-\epsilon, \epsilon).
$$
The following two lemmas are standard and can be essentially found in \cite{k-h}. The next one follows from 
Lemma 20.1.1 in \cite{k-h}.
\begin{lemma}\label{lemma-mera} For any $r_0>0$ there exist constants $C>1$ and $\epsilon_0>0$ such that for every $r>r_0$ and every $\epsilon_0\geq \epsilon>0$ we have
$$
C^{-1} \epsilon^{2n-1}e^{-(n-1)r}\le \Lambda(B_r(v, \epsilon))\le C \epsilon^{2n-1}e^{-(n-1)r}.
$$
\end{lemma}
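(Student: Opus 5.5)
The plan is to sandwich the Bowen ball between two sets whose measures can be computed directly in local product coordinates for the Anosov splitting. The Liouville measure is locally equivalent to the product of the Lebesgue measures on $W^u$, $W^s$ and $W^c$, with Jacobian bounded above and below by constants depending only on $\M$. The steps are: first choose $\epsilon_0$ smaller than the size of the local product neighbourhoods and than $\inj$; then show containments $P_-\subset B_r(v,\epsilon)\subset P_+$; then integrate.

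The two product sets are
$$P_\pm=\{[w^u,w^s,s]\},\qquad \dis(w^u,v)<c_\pm\epsilon e^{-r},\quad \dis(w^s,v)<c_\pm\epsilon,\quad |s|<c_\pm\epsilon,$$
where $[w^u,w^s,s]$ denotes the point obtained from $v$ via the local product structure and the flow. The constants $c_\pm$ depend only on $\M$.

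\emph{Lower inclusion.} Take $u\in P_-$ and let $u'$ be the intermediate point on $W^{s}(v)$. Stable contraction keeps $\dis_\TB(\flow_t u',\flow_t v)\le C c_-\epsilon$ for all $t\ge 0$. For $u$ versus $u'$ we use the unstable leaf through $u'$. Along it, expansion is at most $Ce^{t}$ (the upper estimate holds for constant curvature as well), so $\dis_\TB(\flow_t u,\flow_t u')\le Cc_-\epsilon e^{t-r}\le Cc_-\epsilon$ for $t\le r$. The flow component contributes $c_-\epsilon$ uniformly. Choosing $c_-$ small, the triangle inequality gives $u\in B_r(v,\epsilon)$.

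\emph{Upper inclusion.} For $u\in B_r(v,\epsilon)$, at $t=0$ the vector $u$ lies in the product chart, so all three coordinates are $O(\epsilon)$. At $t=r$, $\flow_r u$ is still $\epsilon$-close to $\flow_r v$, and its unstable coordinate relative to $\flow_r v$ is $O(\epsilon)$ by the uniform product structure. That coordinate equals the image of the unstable coordinate at time $0$ under $\flow_r$. By the expansion bound $\ge C^{-1}e^{r}$, the original unstable coordinate is therefore $\le C\epsilon e^{-r}$.

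\emph{Integration.} The unstable disk is $(n-1)$-dimensional of radius $\asymp \epsilon e^{-r}$, so its volume is $\asymp \epsilon^{n-1}e^{-(n-1)r}$. The stable disk contributes $\asymp \epsilon^{n-1}$ and the flow direction $\asymp\epsilon$. By the bounded-Jacobian comparison of $\Lambda$ with the product measure, we get
$$C^{-1}\epsilon^{2n-1}e^{-(n-1)r}\le \Lambda(B_r(v,\epsilon))\le C\epsilon^{2n-1}e^{-(n-1)r}.$$
The condition $r>r_0$ only ensures that the distortion constants are uniform; one can also absorb small $r$ into $C$.

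The main obstacle is the upper inclusion, specifically controlling the unstable coordinate of $\flow_r u$ through the holonomy. The local product coordinates must be compatible with the flow: the stable and central components moving $u$ off $W^u(v)$ must not spoil the expansion estimate. I would handle this using the constant-curvature structure concretely. In the upper half-space model, identify unit vectors with pairs of endpoints at infinity plus a time parameter. The unstable coordinate is then literally the backward endpoint, and the flow acts on it by the dilation $x\mapsto e^{t}x$ in horospherical coordinates. This makes both inclusions explicit computations, with constants depending only on $\M$ through the chart size below $\inj$.
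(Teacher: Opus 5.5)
Your sandwich argument $P_-\subset B_r(v,\epsilon)\subset P_+$ is correct and follows the paper's route: the paper asserts the product-cylinder picture $B_r(v,\epsilon)\cong \D^{n-1}_{\epsilon e^{-r}}(W^u)\times \D^{n-1}_{\epsilon}(W^s)\times(-\epsilon,\epsilon)$ and cites Katok--Hasselblatt, and your two inclusions, the lift to $\Ha^n$ (via $\epsilon_0<\inj$) and the integration supply the details it delegates. One slip in your last paragraph: the backward endpoint is constant along $W^u(v)$, so the unstable coordinate is the \emph{forward} endpoint (the horospherical coordinate on the horosphere based at the backward endpoint), and that is what the normalized flow dilates by $e^{t}$, whereas the backward endpoint is the contracting stable coordinate.
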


\begin{lemma}\label{lemma-dara} Let $\epsilon>0$, and $0<q<1$. There exists a constant $C>0$ so that for every  $w \in B_r(v, \epsilon)$  there exists $c\in (-\epsilon,\epsilon)$ with 
$v_c = \flow_c(v)$ living in the central leaf of $v$, and  such that 
$$
\dis_\TB(\flow_t(w), \flow_t(v_c)) \le C e^{-qr}
$$
for every $qr<t<(1-q)r$.
\end{lemma}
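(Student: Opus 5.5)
We work in the universal cover. Lift $v$ and $w$ to unit vectors $\tilde v,\tilde w\in T^1\Ha^n$ so that $\dis(\tilde{\flow}_t\tilde w,\tilde{\flow}_t\tilde v)<\epsilon$ for all $t\in[0,r]$; this is possible once $\epsilon$ is below the injectivity radius. The plan is to decompose the displacement from $\tilde v$ to $\tilde w$ along the local product structure of the Anosov splitting. For $\epsilon$ small enough, local product structure gives a unique $c\in(-\epsilon,\epsilon)$ and a chain
$$\tilde w \;\to\; w_u \;\to\; v_c=\flow_c(v),$$
in which $w_u\in W^u_{loc}(\tilde w)$ and $w_u\in W^s_{loc}(v_c)$. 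In words: move along the unstable leaf of $w$, then along the stable leaf to land on the flow line of $v$ at time $c$. All of these leg lengths are bounded by $C\epsilon$ at time $0$.

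The two legs are then controlled separately, one forward and one backward in time.

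\textbf{Stable leg.} Since $w_u$ and $v_c$ lie on a common stable leaf, the first Anosov bullet gives
$$\dis(\flow_t w_u,\flow_t v_c)\le Ce^{-t}\epsilon.$$
For $t>qr$ this is at most $Ce^{-qr}$.

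\textbf{Unstable leg.} Here we must first control the leg at the end time $r$. Because $\flow_r w$ and $\flow_r v$ are $\epsilon$-close, and $\flow_r v_c$ is $|c|$-close to $\flow_r v$, we get $\dis(\flow_r w,\flow_r v_c)\le 2\epsilon$. The stable leg contributes a further $Ce^{-r}\epsilon$ at time $r$. Applying uniqueness of the product-structure decomposition at time $r$, with the decomposition being continuous (Lipschitz), shows that the unstable component $\dis(\flow_r w,\flow_r w_u)$ is at most $C\epsilon$. Reversing the second bullet, i.e. running backward along the unstable leaf, then gives for $t\le r$
$$\dis(\flow_t w,\flow_t w_u)\le Ce^{-(r-t)}\dis(\flow_r w,\flow_r w_u)\le C\epsilon\, e^{-(r-t)}.$$
For $t<(1-q)r$ this is at most $Ce^{-qr}$.

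Adding the two legs by the triangle inequality gives the claimed bound $\dis(\flow_t w,\flow_t v_c)\le Ce^{-qr}$ for $qr<t<(1-q)r$, with $C$ depending only on $\M$, $\epsilon$ and $q$.

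The main obstacle is the unstable-leg step: justifying that the product-structure decomposition is stable under the flow and uniformly Lipschitz. This is what lets the closeness at time $r$ be transferred into smallness of the unstable component. The cleanest route is to do it explicitly in $\Ha^n$ via endpoints at infinity. There the stable leaf of $v$ is determined by the forward endpoint and a horosphere, and the unstable leaf by the backward endpoint. Closeness of $w$ and $v$ over $[0,r]$ forces the forward endpoints $\tilde w^+,\tilde v^+$ to be $C\epsilon$-close as seen from $\pi(\tilde v)$. It also forces the backward endpoints $\tilde w^-,\tilde v^-$ to be $C\epsilon$-close as seen from $\pi(\flow_r\tilde v)$, which means $Ce^{-r}\epsilon$-close as seen from $\pi(\tilde v)$. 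Distances between the geodesics $(\tilde w^-,\tilde w^+)$ and $(\tilde v^-,\tilde v^+)$ at the point at time $t$ are then comparable to $e^{-t}\epsilon+e^{-(r-t)}\epsilon$, after correcting the time shift $c$ via Busemann functions. This yields the estimate directly, and it actually gives the stronger bound $C\epsilon(e^{-t}+e^{-(r-t)})$.
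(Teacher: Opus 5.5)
Your main argument is correct and is essentially the paper's. The paper takes the bracket in the other order, $u\in W^s(w)\cap W^u(v_c)$, and then uses exactly your two estimates: forward contraction on the stable leg, and backward contraction from time $r$ on the unstable leg. As in your own computation, the time-$r$ bound on the unstable leg follows from the triangle inequality alone, so no Lipschitz argument is needed once you work in the cover, where both vectors lie on one unstable horosphere.

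Your optional endpoint sketch, however, has forward and backward reversed. Closeness at time $0$ makes $\tilde w^-,\tilde v^-$ only $C\epsilon$-close as seen from $\pi(\tilde v)$, since moving the viewpoint a distance $r$ toward $\tilde v^-$ magnifies angles near $\tilde v^-$ by a factor comparable to $e^{r}$, not $e^{-r}$. It is closeness at time $r$ that makes $\tilde w^+,\tilde v^+$ be $C\epsilon e^{-r}$-close as seen from $\pi(\tilde v)$, and only with this correction does that route give $C\epsilon\,(e^{-t}+e^{-(r-t)})$.
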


\begin{proof} For each  $w \in B_r(v, \epsilon)$ there exists a unique $v_c = \flow_c(v)$ such that $w$ belongs 
to the  transverse section (or the transverse slice) to the geodesic flow over $v_c$. This transverse section 
is of dimension $(2n-2)$, and has the local product structure 
$\D^{n-1}_{\epsilon e^{-r}}(W^u) \times \D^{n-1}_{\epsilon}(W^s)$. 

In particular, there exists a vector $u$ in the Bowen ball  $B_r(v, \epsilon)$ such that $u\in W^{s}(w)\cap W^u(v_c)$. Then 
$$
\dis_\TB(\flow_t(w),\flow_t(u))\le Ce^{-t}\dis_\TB(w,u)\le Ce^{-t},
$$
and
$$
\dis_\TB(\flow_t(u),\flow_t(v_c))\le Ce^{t-r}\dis_\TB(\flow_r(u),\flow_r(v_c))\le Ce^{t-r}.
$$ 
The estimate in the lemma then follows from the above contraction/expansion properties of the flow's action on $W^s$ and $W^u$.
\end{proof}

\subsection{Relation with the set $\K^\delta_{r,\epsilon}$} The next lemma shows that the set  $\K^{3\delta}_{r,3\epsilon}(\gamma)$ contains a suitable Bowen ball. This enables us to bound below the measure of $\K^{3\delta}_{r,3\epsilon}(\gamma)$ which is the goal of Lemma \ref{lem: measure}.

\begin{lemma}\label{lemma-nena}
For each $\kappa>0$ there exist $\epsilon_0, r_0>0$ with the following properties. Given $\gamma\in \Gamma$, suppose  $\flow_c(v) \in \K^\delta_{r,\epsilon}(\gamma)$ for every $c\in (-\epsilon,\epsilon)$. Then
$$
B_r(v, \epsilon)\subset \K^{3\delta}_{r,3\epsilon}(\gamma),
$$
for  $\delta=r^{-\kappa}$, where $0<\epsilon\le \epsilon_0$, and $r \ge r_0$.
\end{lemma}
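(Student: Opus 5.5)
Take an arbitrary $w\in B_r(v,\epsilon)$. We must check three things: that $w$ satisfies the closing condition $\dis_\TB(w,\flow_r(w))<3\epsilon$; that $w$ satisfies the tightness condition with $3\delta$; and that $\mathcal P_{r,3\epsilon}(w)=\gamma$. The tool throughout is Lemma \ref{lemma-dara}. It gives $c\in(-\epsilon,\epsilon)$ such that the orbit of $w$ stays within $Ce^{-qr}$ of the orbit of $v_c=\flow_c(v)$ for $t\in(qr,(1-q)r)$. Here we fix some $q<1/10$, say $q=1/20$. By hypothesis $v_c\in \K^\delta_{r,\epsilon}(\gamma)$.

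\emph{Closing condition.} By the definition of the Bowen ball, $\dis_\TB(w,v)<\epsilon$ and $\dis_\TB(\flow_r(w),\flow_r(v))<\epsilon$. The hypothesis with $c=0$ gives $\dis_\TB(v,\flow_r(v))<\epsilon$, so the triangle inequality yields $\dis_\TB(w,\flow_r(w))<3\epsilon$.

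\emph{Tightness.} Since $v_c$ is $(r,\epsilon,\delta)$-tight, there are $s,t\in[\tfrac r{10},\tfrac{9r}{10}]\subset(qr,(1-q)r)$ with $|s-t|\ge\inj$ and $\dis_\M(\pi\flow_s(v_c),\pi\flow_t(v_c))<\delta$. The projection $\pi$ is $1$-Lipschitz from the Sasaki metric. Hence
\[
\dis_\M(\pi\flow_s(w),\pi\flow_t(w))<\delta+2Ce^{-qr}.
\]
Since $\delta=r^{-\kappa}$ decays only polynomially, we have $2Ce^{-qr}\le 2r^{-\kappa}$ once $r\ge r_0(\kappa)$. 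So the bound is at most $3\delta$, and $w\in\K^{3\delta}_{r,3\epsilon}$.

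\emph{Same closed geodesic.} This step is the main obstacle. We must show that $P_r(w)$ is freely homotopic to $P_r(v_c)$; since $v_c\in\K^\delta_{r,\epsilon}(\gamma)$, this gives $\mathcal P_{r,3\epsilon}(w)=\mathcal P_{r,\epsilon}(v_c)=\gamma$. (If needed we should also check that the maps $\mathcal P_{r,\epsilon}$ and $\mathcal P_{r,3\epsilon}$ agree on $\K_{r,\epsilon}$; they do, since the construction is the same.)

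I would instead compare $P_r(w)$ with $P_r(v)$ directly, using $\dis_\TB(\flow_t w,\flow_t v)<\epsilon$ for all $t\in[0,r]$; the hypothesis with $c=0$ gives $P_r(v)\simeq\gamma$. The plan is as follows.
\begin{enumerate}
\item For each $t\in[0,r]$ join $\pi\flow_t(w)$ to $\pi\flow_t(v)$ by the unique shortest arc, which has length $<\epsilon<\inj/2$. These arcs vary continuously in $t$ and give a homotopy between the arcs $\alpha_r(w)$ and $\alpha_r(v)$ relative to short connecting arcs at the endpoints.
\item Close both ends with the shortest arcs from $\pi\flow_r(\cdot)$ back to $\pi(\cdot)$. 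Each such arc has length $<3\epsilon$. The resulting quadrilateral at the endpoint has all sides of length $<3\epsilon$, hence perimeter $<10\epsilon<2\inj$ for $\epsilon_0$ small. It therefore lies in an embedded ball and is null-homotopic.
\item Conclude that $P_r(w)\simeq P_r(v)$ freely, hence both are homotopic to $\gamma$ by the earlier lemma. The quasigeodesic property of $P_r(w)$, needed so that $\mathcal P_{r,3\epsilon}(w)$ is defined, holds because $3\epsilon\le3\epsilon_0$ is still small.
\end{enumerate}

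Finally, choose $\epsilon_0\le\inj/10$ and small enough for the Chain Lemma at scale $3\epsilon_0$. Choose $r_0$ large enough for the exponential-versus-polynomial comparison and for the constant in Lemma \ref{lemma-dara}.
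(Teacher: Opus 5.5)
Your proposal is correct and follows essentially the paper's own argument: the triangle inequality for the closing condition, Lemma \ref{lemma-dara} with exponential-versus-polynomial decay for tightness, and pointwise closeness of $P_r(w)$ and $P_r(v)$ inside the Bowen ball to identify the closed geodesic. Your write-up is in fact more careful than the paper's: you use the tightness of $v_c$ rather than of $v$, which is exactly why the hypothesis is imposed for every $c\in(-\epsilon,\epsilon)$, and you make the homotopy explicit (one small slip there: four sides each $<3\epsilon$ give perimeter $<12\epsilon$, not $<10\epsilon$, although the actual sides give $<6\epsilon$, and either bound is below $2\inj$ once $\epsilon_0\le\inj/10$).
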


\begin{proof} Let  $w \in B_r(v, \epsilon)$.  By Definition \ref{def-bowen} we have that 
$$
\dis_\TB(w,v), \, \dis_\TB(\flow_r(w),\flow_r(v))<\epsilon.
$$ 
Since $v\in \K^\delta_{r,\epsilon}$ it follows that $\dis_\TB(v,\flow_r(v))<\epsilon$. Combining these we conclude that 
\begin{equation}\label{eq-dimit}
\dis_\TB(w,\flow_r(w))<3\epsilon.
\end{equation}
On the other hand, by Lemma \ref{lemma-dara}  there exists $c\in (-\epsilon,\epsilon)$ such that 
\begin{equation}\label{eq-range}
\dis_\TB(\flow_t(w), \flow_t(v_c)) \le C_1 e^{-\frac{r}{10}}
\end{equation}
for every $\frac{r}{10}\le t \le \frac{9r}{10}$, where $v_c = \flow_c(v)$, and where $C_1$ is the constant from  Lemma \ref{lemma-dara}. Since $v\in \K^\delta_{r,\epsilon}$ there exist  $s,t\in [\tfrac{r}{10},\tfrac {9r}{10}]$ with $|s-t|\geq\mathbf i_{\M}$, and such that 
$$
\dis_{\M} (\pi\circ \flow_s(v),\pi\circ \flow_t(v))< \delta.
$$
Applying (\ref{eq-range}) to this particular $t$ and $s$ we conclude that
$$
\dis_{\M} (\pi\circ \flow_s(w),\pi\circ \flow_t(w))< \delta +2C_1 e^{-\frac{r}{10}}.
$$
Since $\delta=r^{-\kappa}$, we can find $r_0$ large enough so that  $\delta +C_1e^{-\frac{r}{10}}<2\delta$. This shows that
$B_r(v, \epsilon)\subset \K^{3\delta}_{r,3\epsilon}$. But for any two vectors $u,w\in B_r(v, \epsilon)$ the pointwise distance between piecewise geodesics $P_r(u)$ and $P_r(w)$ is smaller than $\epsilon$ which implies that they are homotopic to small enough $\epsilon$. Thus, $\mathcal J^\delta_{r,\epsilon}(u)=\mathcal J^\delta_{r,\epsilon}(w)$, and $B_r(v, \epsilon)\subset \K^{3\delta}_{r,3\epsilon}(\gamma)$ where $\gamma=\mathcal J^\delta_{r,\epsilon}(v)$.
\end{proof}

\subsection{Proof of Lemma \ref{lem: measure}}\label{sec: proof lemma} 
Fix  $\kappa>0$, and let  $\delta= r^{-\kappa}$ where $r>0$. Suppose $\epsilon\le \epsilon_0$ where $\epsilon_0$ is the constant from Lemma \ref{lemma-nena}.  Let $\gamma\in\Gamma^\delta_{r,\epsilon}$. In view of Lemma \ref{lemma-nena} and Lemma \ref{lemma-mera},  it suffices to show that there exists $v \in \TB$ such that  $\flow_c(v) \in \K^\delta_{r,\epsilon}(\gamma)$ for every $c\in (-\epsilon,\epsilon)$.

Recall the map $\gamma: \Sp^{\len(\gamma)}\to \M$.  Since $\gamma$ is not $\delta$-collared there exist two points $x,y\in \Sp^{\len(\gamma)}$ such that $\dis_{M}(\gamma(x),\gamma(y))<\delta$ and $\dis_{M}(\gamma(x),\gamma(y)) \neq \dis_\Sp(x,y)$. Let $I\subset \Sp^{\len(\gamma)}$ denote  the longer  subarc of $\Sp^{\len(\gamma)}$ with endpoints $x$ and $y$. Then  $\len(\gamma(I))\geq\tfrac{r}{4}$. 

Let $z\in \Sp^{\len(\gamma)}$ be the midpoint of the subarc $I$, and let  $v\in \TB$ be the  vector tangent to $\gamma$ at the point $\gamma(z)$. Then clearly 
$$
\dis_{\mathcal T}(v,\mathbf g_r(v))<\epsilon,
$$
and 
$$
\dis_{\M} \left(\pi\circ \flow_s(v),\pi\circ \flow_t(v) \right)< \delta,
$$
for some $s,t\in [\tfrac{r}{4},\tfrac {4r}{4}]$ with $|s-t|\geq\mathbf i_{\M}$. Furthermore, when $r$ is large enough the same holds for every $v_c=\flow_c(v)$, $c\in(-\epsilon,\epsilon)$. This enables us to apply Lemma \ref{lemma-nena} and the proof is complete.

\section{The measure of the set of tight vectors}
In this section, we prove Theorem \ref{thm: measure} by bounding above the measure of tight vectors $\K^\delta_{r,\epsilon}$. 
\begin{definition} Let $\delta>0$. By $\mathcal L^\delta_r$ we denote the set consisting of all $v\in \TB$ satisfying
$$
\dis_{\M}(\pi(v),\pi\circ\flow_t(v))\leq\delta,
$$
for some $\inj\leq t\leq r$.
\end{definition}
First we estimate the measure of $\mathcal L^\delta_r$. Then we complete the proof of Theorem \ref{thm: measure} by showing that  the total measure of $\mathcal K^\delta_{r,\epsilon}$ is dominated by the total  measure of $\mathcal L^\delta_r$.

\begin{lemma}\label{cor: L-estimate} There exist  constants $C,\delta_0>0$ such that  the inequality
$$
\Lambda\left(\mathcal L^\delta_r\right)\leq C\delta^{n-1}r
$$
holds for every $r\ge 10\inj$, and every  $0<\delta\le \delta_0$.
\end{lemma}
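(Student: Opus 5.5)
We lift to the universal cover and turn the estimate into a lattice-point count. Write $\M=\Ha^n/G$ with $G$ a cocompact torsion-free group of isometries, and fix a compact fundamental domain $F\subset\Ha^n$. The Liouville measure on $\TB$ lifts to the product of hyperbolic volume on $F$ and the normalized round measure $\sigma$ on the unit sphere $T^1_p\Ha^n\cong\Sp^{n-1}$. Take $v\in\mathcal L^\delta_r$ with lift $\tilde v$ based at $p\in F$. Then there are some $t\in[\inj,r]$ and some $g\in G$ with $\dis_{\Ha^n}(\gamma_{\tilde v}(t),gp)\le\delta$, where $\gamma_{\tilde v}$ is the geodesic ray from $p$ in direction $\tilde v$. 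Since $\delta\le\delta_0<\inj/2$ and $t\ge\inj$, we get $g\ne 1$, $\dis(p,gp)\le r+\delta$, and $\dis(p,gp)\ge \inj-\delta\ge\inj/2$.

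Fix $p$ and count the directions. For each such $g$, the set of unit vectors at $p$ whose ray passes within $\delta$ of the point $q=gp$ is a spherical cap of angular radius $\theta$ with $\sinh\theta\cdot$ (roughly) $\sinh d\approx\sinh\delta$, where $d=\dis(p,q)$. This is the standard right-angled triangle estimate: $\sin\theta=\sinh\delta/\sinh d$. Hence
\[
\sigma(\text{cap})\le C\Big(\frac{\delta}{\sinh d}\Big)^{n-1}\le C'\delta^{n-1}e^{-(n-1)d},
\]
using $d\ge\inj/2$ to control $\sinh d$ from below near $0$. Summing over $g$ gives
\[
\sigma\{\tilde v\in T^1_p\Ha^n:\ \tilde v \text{ lifts a point of } \mathcal L^\delta_r\}\le C'\delta^{n-1}\sum_{g\ne1,\ \dis(p,gp)\le r+\delta_0}e^{-(n-1)\dis(p,gp)}.
\]

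Now apply the orbit counting estimate. Because $G$ is cocompact, there is a constant $A$, uniform in $p\in F$, such that $\#\{g:\ k\le\dis(p,gp)<k+1\}\le Ae^{(n-1)(k+1)}$. This follows from the volume of balls in $\Ha^n$ plus disjointness of the translates of a small ball of radius $\inj/2$. Splitting the sum into unit shells $k=0,1,\dots,\lceil r+\delta_0\rceil$, each shell contributes at most $Ae^{n-1}$, so the sum is at most $C''(r+2)\le C'''r$ for $r\ge10\inj$. Integrating over $p\in F$ then yields $\Lambda(\mathcal L^\delta_r)\le C\delta^{n-1}r$.

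The main obstacle is the cap estimate. It must hold uniformly, including for $g$ with $\dis(p,gp)$ of order $\inj$, where the ray can pass near $gp$ only at a nondegenerate angle. It must also remain valid when the near-approach time $t$ is constrained to $[\inj,r]$. Dropping that constraint only enlarges the set, which is harmless, so the real content is just the hyperbolic trigonometry bound $\theta\lesssim\delta e^{-d}$ for $d$ bounded below. This is presumably what the paper's Section 4 "standard estimates" supply. Care is also needed to ensure that the shell counting constant does not depend on $p$; compactness of $F$ handles this.
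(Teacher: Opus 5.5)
Your proof is correct, and it takes a genuinely different route from the paper's.

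\textbf{The paper's route.} The paper discretizes the base point:
\begin{itemize}
\item it takes a $\delta$-net $\mathcal P$ with $|\mathcal P|\le C\delta^{-n}$;
\item it uses the fellow-travelling estimates of Lemmas \ref{lem: dynamical nbhd-1}--\ref{lem: dynamical nbhd} to cover $\mathcal L^\delta_r$ by the Bowen balls $B^{c\delta}_\gamma$ centred at the initial vectors of pointed geodesic loops $\gamma\in\Pi_p$, with $p\in\mathcal P$ and $\len(\gamma)\le 2r$ (Lemma \ref{lem: disintegration});
\item it bounds each such ball by $C\delta^{2n-1}e^{-(n-1)\len(\gamma)}$ via Lemma \ref{lemma-mera};
\item it sums using Sullivan's orbit count, so the total is $\delta^{-n}\cdot\delta^{2n-1}\cdot r$.
\end{itemize}

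\textbf{Your route.} You disintegrate $\Lambda$ exactly over the base. In each fibre you cover the bad directions by the spherical caps cut out by $\sin\theta=\sinh\delta/\sinh\dis(p,gp)$. Your cap factor $\delta^{n-1}e^{-(n-1)d}$ is exactly the paper's Bowen-ball volume $\delta^{2n-1}e^{-(n-1)\ell}$ divided by the $\delta^n$ volume of a base cell of the net. So the two arguments are the continuous (Fubini) and discretized (covering) versions of the same count.

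\textbf{What each buys.} Your version is more elementary and self-contained. It needs no net, no Sasaki-distance comparison, and no Bowen-ball volume lemma. It also needs only the packing bound for the orbit count, which suffices for an upper bound. That packing bound uses only $\dis(p,gp)\ge 2\inj$ for $g\ne 1$, so uniformity in $p$ is automatic and compactness of $F$ is not really needed. The paper's version stays in the Bowen-ball language it also uses for the lower bound in Lemma \ref{lem: measure}, at the cost of the extra geometric lemmas of Section 4.
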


\subsection{Proof of Theorem \ref{thm: measure}}
We estimate the measure of $\K^\delta_{r,\epsilon}$ from above  by the measure of $\mathcal L^{2\delta}_r$. Fix $v\in\K^\delta_{r,\epsilon}$. By definition,  there exist suitable times 
$$
\frac{r}{10}\le t,s \le \frac{9r}{10},
$$ 
and
$$
t\ge s+\inj,
$$
such that 
\begin{equation}\label{eq-vaza}
\dis_{\M}\left(\pi\circ \mathbf g_s(v),\pi\circ\mathbf g_t(v)\right)< \delta.
\end{equation}
Let $k=\lfloor\tfrac{s}{\delta}\rfloor$. Then $|s-k\delta|\le \delta$.	 Since $\flow_t$ is the geodesic flow,  we have 
	\[
	\dis_{\M}\left(\pi\circ \mathbf g_{k\delta}(v),\pi\circ\mathbf g_s(v)\right)\leq |s-k\delta|\leq\delta.
	\]
	Combining this with ({\ref{eq-vaza}), and using the triangle inequality, we obtain the  estimate
	\[
	\dis_{\M}\left(\pi\circ \mathbf g_{k\delta}(v),\pi\circ\mathbf g_t(v)\right)< 2\delta,
	\] 
	which implies that $v\in \flow_{k\delta}^{-1}(\mathcal L^{2\delta}_r)$. Hence, we have established the inclusion
\begin{equation}\label{eq-vaza-1}
	\K^\delta_{r,\epsilon}\subset\bigcup_{k=0}^{\lfloor\tfrac{r}{\delta}\rfloor}\flow_{k\delta}^{-1}(\mathcal L^{2\delta}_r).
\end{equation}

Since the flow $\flow_t$ preserves the measure, we have
	$$
	\Lambda\left(\K^\delta_{r,\epsilon}\right)\leq\sum_{k=0}^{\lfloor\tfrac{r}{\delta}\rfloor}\Lambda\left(\flow_{k\delta}^{-1}(\mathcal L^{2\delta}_r)\right)\leq \frac{r}{\delta} \, \Lambda\left(L^{2\delta}_r\right).
	$$
Combining the above estimate with Lemma \ref{cor: L-estimate}, we obtain
\[
\Lambda\left(\K^\delta_{r,\epsilon}\right)\leq C\delta^{n-2}r^2,
\]
for some constant $C>0$. This completes the proof.

\section{Hyperbolic geometry background}

In this section we obtain some preliminary results needed in the proof of  Lemma \ref{cor: L-estimate}. First, we estimate above the Sasaki distance between vectors that are tangent to two fellow traveling geodesic arcs in the hyperbolic space $\Ha^n$. Then we recall the notion of a $\delta$-net in $\M$. We finish the section by recalling the standard orbit counting estimate and prove it's corollary.

\subsection{Estimating the Sasaki distance}

\begin{lemma}\label{lem: dynamical nbhd-1} For each $L>0$ there exists a constant $C=C(L)$ so that for every $\delta>0$ the following holds. 
Suppose $\alpha,\beta$ are geodesic arcs in $\Ha^n$ of length at least $L$, and whose endpoints are at most $\delta$ apart. If $u_j(\alpha)$ and $u_j(\beta)$, $j=1,2$, are the initial and the terminal tangent vectors at $\alpha$ and $\beta$ respectively then 
$$
\dis_{\TB}(u_j(\alpha),u_j(\beta))\leq C\delta.
$$
\end{lemma}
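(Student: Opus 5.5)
The idea is to bound the base-point displacement and the angular displacement separately. The Sasaki distance on $T^1\Ha^n$ is comparable, up to a universal constant, to the sum of two quantities. The first is the distance between the base points. The second is the angle between one vector and the parallel transport of the other along the geodesic joining the base points. The base points of $u_1(\alpha)$ and $u_1(\beta)$ are the initial endpoints, at distance at most $\delta$. It therefore suffices to bound the angle between $u_1(\beta)$ and the parallel transport of $u_1(\alpha)$ by $C\delta$. By symmetry, reversing orientations, the terminal vectors are handled the same way. We may also assume $\delta\le \delta_1$ for some small $\delta_1=\delta_1(L)$. For $\delta>\delta_1$ the estimate is trivial: the Sasaki diameter of a fibre is $\pi$, so
$$
\dis_{\TB}(u_j(\alpha),u_j(\beta))\le \delta+\pi\le (1+\pi/\delta_1)\delta .
$$

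To bound the angle, I would insert an intermediate arc. Write $\alpha=[a_1,a_2]$ and $\beta=[b_1,b_2]$, and let $\sigma=[a_1,b_2]$. Then $\alpha$ and $\sigma$ share the initial point $a_1$, and their terminal points $a_2,b_2$ are at distance at most $\delta$ from each other, while being at distance at least $L-\delta\ge L/2$ from $a_1$. The angle $\theta$ at $a_1$ of the geodesic triangle $a_1a_2b_2$ can be controlled by hyperbolic trigonometry, for instance the hyperbolic law of sines:
$$
\sin\theta=\frac{\sinh \dis(a_2,b_2)\,\sin(\angle a_2)}{\sinh \dis(a_1,b_2)}\le \frac{\sinh\delta}{\sinh(L/2)}\le C(L)\delta .
$$
One also has to exclude $\theta$ being close to $\pi$. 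This holds because the side $a_2b_2$ is short while the other two sides are long, so for $\delta$ small $\theta$ is close to $0$; comparison with a Euclidean triangle gives the same. Hence $\theta\le C(L)\delta$.

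Next compare $\sigma$ with $\beta$. They share the terminal point $b_2$, and their initial points $a_1,b_1$ are within $\delta$. The same argument gives that the angle at $b_2$ between them is at most $C(L)\delta$. The real quantity needed, however, is at the initial end: the tangent vector of $\sigma$ at $a_1$ versus that of $\beta$ at $b_1$. For this, apply the estimate at the other vertex of the triangle $a_1b_1b_2$. The angle there is not small, but the Sasaki distance between the unit tangent vectors of two geodesics through a common far point $b_2$, based at nearby points $a_1,b_1$, is what we need. So I would instead view $\sigma$ and $\beta$ as geodesics ending at a common point $b_2$ at distance at least $L/2$. The unit vector field $X=-\nabla \dis(\cdot,b_2)$ is smooth on the complement of the ball of radius $L/4$ about $b_2$, with derivative bounded by $\coth(L/4)$ (the Hessian of the distance function in $\Ha^n$). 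Consequently the map $x\mapsto X(x)\in T^1\Ha^n$ is $C(L)$-Lipschitz for the Sasaki metric there, and
$$
\dis_{\TB}(u_1(\sigma),u_1(\beta))\le C(L)\delta .
$$
The same Hessian argument with the point $a_2$ handles $\alpha$ versus $\sigma$ directly, which makes the law-of-sines step optional. The triangle inequality in $\TB$ then gives $\dis_{\TB}(u_1(\alpha),u_1(\beta))\le 2C(L)\delta$.

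The main obstacle is making the Lipschitz bound for the radial field precise in the Sasaki metric. This requires the identification of the Sasaki norm of $dX(w)$ with $\sqrt{|w|^2+|\nabla_w X|^2}$. It also requires the fact that $\nabla X$ equals $\coth(\rho)$ times the projection onto the orthogonal complement, where $\rho$ is the distance to the centre. Both are standard, and the segment joining $a_1$ to $b_1$ stays at distance at least $L/4$ from the centre once $\delta$ is small. The remaining pieces are routine.
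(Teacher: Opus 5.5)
Your proof is correct. It uses the same skeleton as the paper: the auxiliary arc from $a_1$ to $b_2$, two estimates, and the triangle inequality in $\TB$. The two halves are arranged differently, though, and the key propagation step rests on a different lemma.

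The paper works at the terminal end. It bounds the angle at the common vertex $a_1$ of $\alpha$ and $[a_1,b_2]$ by the cosine rule. It then invokes Claim 4.1 of Kahn--Markovi\'c to carry that angle bound along the arcs to the far endpoints $a_2,b_2$. Meanwhile $\beta$ and $[a_1,b_2]$ meet at $b_2$, where the narrow-triangle bound applies directly. The initial vectors are handled by the symmetric argument.

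You work at the initial end. The common-vertex angle at $a_1$ comes from the law of sines. The genuinely non-local comparison is between unit vectors at the nearby points $a_1,b_1$ aimed at the common far point $b_2$. You get it from the Sasaki-Lipschitz bound $\sqrt{1+\coth^2(L/4)}$ for the radial field $-\nabla\dis(\cdot,b_2)$, computed from the Hessian of the distance function. This bound plays exactly the role of Claim 4.1 in the paper.

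Your route is self-contained and gives explicit constants. It also treats large $\delta$ explicitly via the fibre diameter $\pi$, which the paper leaves implicit. The paper's route is shorter because it outsources the propagation step to Kahn--Markovi\'c.

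One small correction: your closing remark, that the Hessian argument ``with the point $a_2$'' makes the law-of-sines step optional, does not work as stated. A radial field centred at one point compares vectors at different base points aimed at the same target. At $a_1$ you must instead compare directions toward two different targets $a_2,b_2$. That needs a lower bound on how fast geodesics from $a_1$ diverge, not an upper Lipschitz bound. The law of sines supplies it, or equivalently the polar form $d\rho^2+\sinh^2\rho\, g_{\Sp^{n-1}}$ of the metric, which gives $\theta\le \delta/\sinh(L-\delta)$. Since you keep the law-of-sines step, the proof is unaffected.
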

\begin{proof}
Let $\Theta(\cdot,\cdot)$ denote the non-oriented angle between two unit vectors supported at the same point.  Let $a,b\in \Ha^n$.  If $u\in \TB_a(\Ha^n)$ then $u@ b\in \TB_b(\Ha^n)$ denotes the vector parallel transported to $b$ along the geodesic segment connecting $a$ and $b$.  For $u\in \TB_a(\Ha^n)$, and $v\in \TB_b(\Ha^n)$, we have the following fact:
	\begin{equation}
	\label{E000}
	 \dis_{\TB}(u,v)\leq\dis_{\Ha^n}(a,b)+\Theta(v@b,u).
	\end{equation}

Denote by $a_1$ and $a_2$ the endpoints of $\alpha$, and by $b_1$ and $b_2$ the endpoints of $\beta$, such that 
 \begin{equation}\label{E000.5}
\dis_{\Ha^{n}}(a_j,b_j)\le \delta.
\end{equation}
Let $\gamma$ be the geodesic arc in $\Ha^n$ connecting $a_1$ and $b_2$.  By $u_1(\alpha),u_1(\beta),u_1(\gamma)$, and $u_2(\alpha),u_2(\beta),u_2(\gamma)$, we denote respectively the initial and the terminal tangent vectors of corresponding geodesic arcs.

The geodesic arcs $\alpha$ and $\gamma$ form a narrow triangle in hyperbolic space with two long edges of length at least  $(L-\delta)$, and one short edge of length at most $\delta$. By the cosine rule for hyperbolic triangles there exists a constant $C_1=C_1(L)$ such that  (see also \cite{e-m-m})
\begin{equation}\label{E001}
\Theta (u_1(\alpha),u_1(\gamma))\leq C_1\delta.
\end{equation}
Furthermore, according to  Claim 4.1 in \cite{k-m}, we have
$$
\Theta (u_2(\alpha),u_2(\gamma)@a_2)\leq \Theta (u_1(\alpha),u_1(\gamma))+\dis_{\Ha^{n}}(a_2,b_2)\le (1+C_1)\delta,
$$
where the last inequality follows from (\ref{E001}) and (\ref{E000.5}).
Combining this with (\ref{E000}) and (\ref{E000.5}),  we obtain
$$
\dis_{\TB}(u_2(\alpha),u_2(\gamma))\leq \Theta (u_2(\alpha),u_2(\gamma)@a_2)+\dis_{\Ha^n}(a_2,b_2)\leq (2+C_1)\delta.
$$
Applying the same argument to the narrow triangle with the edges $\beta$ and $\gamma$, same as in (\ref{E001})
we obtain the inequality
$$
\dis_{\TB}(u_2(\beta),u_2(\gamma))= \Theta(u_2(\beta),u_2(\gamma)) \leq C_1\delta.
$$	
By the triangle inequality, 
$$
\dis_{\TB}(u_2(\alpha),u_2(\beta)) \le \dis_{\TB}(u_2(\alpha),u_2(\gamma))+ \dis_{\TB}(u_2(\beta),u_2(\gamma)) \le 2(1+C_1)\delta.
$$
The similar argument bounds above the distance $\dis_{\TB}(u_1(\alpha),u_1(\beta))$. This  completes the proof by letting $C=2(1+C_1)$.
\end{proof}

We have the following corollary of the previous lemma.

\begin{lemma}\label{lem: dynamical nbhd} For each $L>0$ there exists a constant $c=c(L)$ so that for every 
$0<\delta\le L/4$ the following holds. 
Suppose $\alpha,\beta$ are geodesic arcs in $\Ha^n$ of length at least $L$, and whose endpoints are at most $\delta$ apart. Let $u$ and $v$ denote the initial tangent vectors to $\alpha$ and $\beta$ respectively. Then 
$u\in B_{\len(\beta)}(v,c\delta)$ (recall that $B_{\len(\beta)}(v,c\delta)$  denotes the $(\len(\beta),c\delta)$-Bowen ball centred at $v$).
\end{lemma}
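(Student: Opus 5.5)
We need to show that $\dis_\TB(\flow_t(u),\flow_t(v))<c\delta$ for every $t\in[0,\len(\beta)]$. Write $\alpha(t)$ and $\beta(t)$ for the arclength parametrizations starting at the initial points $a_1,b_1$, and put $\ell_\alpha=\len(\alpha)$, $\ell_\beta=\len(\beta)$. The triangle inequality gives $|\ell_\alpha-\ell_\beta|\le 2\delta$. The plan is to compare $\flow_t(v)$, the tangent vector of $\beta$ at $\beta(t)$, with the tangent vector of $\alpha$ at a suitable nearby point. The comparison comes from applying Lemma \ref{lem: dynamical nbhd-1} to subarcs of $\alpha$ and $\beta$ whose endpoints are $O(\delta)$ apart.

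\emph{Step 1: pointwise closeness of the arcs.} The distance function between two geodesics in $\Ha^n$ is convex. Parametrize both arcs proportionally to arclength on $[0,1]$. The reparametrized distance $\dis(\alpha(s\ell_\alpha),\beta(s\ell_\beta))$ is then convex in $s$ and at most $\delta$ at $s=0,1$, so it is at most $\delta$ throughout. Since $|s\ell_\alpha - s\ell_\beta|\le 2\delta$, it follows that $\dis(\alpha(t),\beta(t))\le 3\delta$ for all $t\in[0,\min(\ell_\alpha,\ell_\beta)]$. If $\ell_\beta>\ell_\alpha$, then for $t\in[\ell_\alpha,\ell_\beta]$ we still have $\dis(\alpha(\ell_\alpha),\beta(t))\le 3\delta$. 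Here $\flow_t(u)$ is interpreted as the geodesic continuation of $\alpha$, which changes things by at most $2\delta$ in the Sasaki metric.

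\emph{Step 2: apply Lemma \ref{lem: dynamical nbhd-1} on subarcs.} Fix $t\in[0,\ell_\beta]$. First suppose $t\le \ell_\beta-L/2$. Take the subarcs $\alpha|_{[t,\ell_\alpha]}$ and $\beta|_{[t,\ell_\beta]}$. Their lengths are at least $L/2-2\delta\ge L/4$, using $\delta\le L/4$... more precisely, to keep the lengths at least $L/4$ we shrink to $[t,t+L/4]$ if needed. Their endpoints are at most $3\delta$ apart by Step 1. Lemma \ref{lem: dynamical nbhd-1} with length parameter $L/4$ then bounds the distance between the initial tangent vectors, namely $\dis_\TB(\flow_t(u),\flow_t(v))\le 3C(L/4)\delta$. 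In the remaining case $t>\ell_\beta-L/2$, we use the backward subarcs $[t-L/4,t]$ and compare terminal tangent vectors, which Lemma \ref{lem: dynamical nbhd-1} also controls. This is legitimate because $\ell_\beta\ge L$ forces $t-L/4\ge 0$.

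\emph{Step 3: conclude.} Set $c=3C(L/4)+1$, which gives the strict inequality required by Definition \ref{def-bowen}. Then $u\in B_{\len(\beta)}(v,c\delta)$. Since the Sasaki metric on $\TB$ of $\M$ is bounded by that of $\Ha^n$ under projection, the same bound holds for the projected vectors.

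The main obstacle is Step 1, the uniform $O(\delta)$ closeness at equal times rather than merely at proportional times. The convexity argument combined with the length-difference bound should handle it, but I would double-check that the correction $2\delta$ does not depend on $L$. It appears not to, since the correction only uses $|\ell_\alpha-\ell_\beta|\le2\delta$.
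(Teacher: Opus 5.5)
Your argument is correct and is essentially the paper's proof. Both obtain equal-time closeness $\dis(\alpha(t),\beta(t))=O(\delta)$ from convexity of the distance function: the paper first extends or shortens $\alpha$ to length $\len(\beta)$, while you use the proportional reparametrization plus the $2\delta$ length correction. Both then apply Lemma \ref{lem: dynamical nbhd-1} to a forward or backward subarc through $t$.

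One small slip: $L/2-2\delta\ge L/4$ fails when $\delta$ is close to $L/4$. So passing to the windows $[t,t+L/4]$ is necessary, not optional; these run along the geodesic continuation of $\alpha$, where your convexity bound still gives $3\delta$.
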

\begin{proof} Let $\wh{\alpha}$ be the geodesic arc obtained by either extending or shortening $\alpha$, and such that
$\len(\wh{\alpha})=\len(\beta)$. Then 
\begin{itemize}
\item  $\alpha$ and $\wh{\alpha}$ have the same initial tangent vector $u$, 
\vskip .1cm
\item  $\len(\wh{\alpha})=\len(\beta)\ge L-\delta \ge \frac{3L}{4}$,
\vskip .1cm
\item the distance between the endpoints of $\wh{\alpha}$ and $\beta$ is at most $2\delta$.
\end{itemize}
Consider the natural parameterisations $\wh{\alpha}, \beta : [0,r] \to \Ha^n$, where $r=\len(\beta)$.
Let $\wh{\alpha}_0(t)$ and $\beta_0(t)$ be the geodesic arcs with the endpoints $\wh{\alpha}(0),\wh{\alpha}(t)$, and 
$\beta(0),\beta(t)$, respectively. Let $\wh{\alpha}_1(t)$ and $\beta_1(t)$ be the geodesic arcs with the endpoints $\wh{\alpha}(t),\wh{\alpha}(r)$, and $\beta(t),\beta(r)$, respectively. Then there exists $j\in \{0,1\}$ such that  both geodesic arcs $\wh{\alpha}_j$ and $\beta_j$ are longer than $L/3$, and that the distance between the endpoints of 
$\wh{\alpha}_j$ and $\beta_j$ is at most $2\delta$ (the last claim follows from the convexity of the hyperbolic distance).

Let $u(t)$ and $v(t)$ be the vectors tangent to $\wh{\alpha}(t)$ and $\beta(t)$. From the previous lemma (applied to $\wh{\alpha}_j$ and $\beta_j$) we find that 
$$
\dis_{\TB}(u(t),v(t))\leq 2C_1\delta
$$
where $C_1=C_1(L/3)$ is the constant from Lemma \ref{lem: dynamical nbhd-1}. Letting $C=2C_1$ proves the lemma.
\end{proof}

\subsection{The $\delta$-nets in $\M$} We recall the definition of a $\delta$-net.

\begin{definition}\label{def-net} Let $\mathcal P$ be a finite collection of points in $\M$, and $\delta>0$. We say that $\mathcal P$ is a $\delta$-net  if
\begin{itemize}
\item[(a)] for any $p\in \M$ there exists $p_j \in \mathcal{P}$ such that $\dis_{\M}(p,p_j)< \delta$,
\vskip .1cm
\item[(b)] for any $p_1,p_2\in\mathcal P$ we have  $ \dis_{\M}(p_1,p_2)\ge \delta$. 
\end{itemize}
\end{definition}
\begin{lemma}\label{lemma-mozda} There exists a constant $C>0$ such that for each $\delta>0$ there exists a  $\delta$-net $\mathcal P$ in $\M$ with the property
\begin{equation}\label{E004}
|\mathcal P|\leq \frac{\text{Vol}(\M)}{\text{Vol(radius $\delta/2$ ball)}}\leq C\delta^{-n}.
\end{equation}
\end{lemma}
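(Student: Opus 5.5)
We build the net greedily, as a maximal $\delta$-separated subset of $\M$, and then get the cardinality bound from a volume-packing comparison. Since $\M$ is compact, Zorn's lemma is not really needed. Start with any point $p_1\in\M$. Inductively, as long as some point of $\M$ lies at distance at least $\delta$ from all previously chosen points, pick such a point and add it to $\mathcal P$. Every collection produced this way satisfies condition (b) of Definition \ref{def-net}.

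The first step is to show that this procedure stops after finitely many choices. The open balls of radius $\delta/2$ centred at the chosen points are pairwise disjoint: if $z$ lay in two of them, the triangle inequality would give $\dis_{\M}(p_i,p_j)<\delta$, contradicting (b). Each such ball has volume at least $\min\{\text{Vol}(B_{\Ha^n}(\delta/2)),\,\text{Vol}(B_{\Ha^n}(\inj/2))\}>0$ (this lower bound is made precise below). Since $\text{Vol}(\M)<\infty$, the number of chosen points is bounded, so the process terminates. When it stops, no point of $\M$ is at distance $\ge\delta$ from every $p_j$, which is exactly condition (a).

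The cardinality bound comes from the same disjointness:
\[
|\mathcal P|\cdot \text{Vol(radius $\delta/2$ ball)}\le \text{Vol}(\M).
\]
To make this meaningful, note that when $\delta/2\le\inj$ every radius $\delta/2$ ball in $\M$ is isometric to a hyperbolic ball. Its volume is therefore independent of the centre and at least $c_n\delta^n$, because $\text{Vol}(B_{\Ha^n}(\rho))\ge \omega_n\rho^n$, where $\omega_n$ is the Euclidean unit-ball volume; this follows from $\sinh\rho\ge\rho$. Combining the two bounds gives $|\mathcal P|\le C\delta^{-n}$ for $\delta\le 2\inj$, with $C$ depending only on $\M$.

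The one point that needs care is $\delta>2\inj$, where balls in $\M$ are no longer embedded hyperbolic balls. In this range any $\delta$-separated set is also $2\inj$-separated, so by the previous paragraph $|\mathcal P|\le C'$, a constant depending only on $\M$. If $\delta\ge \text{diam}(\M)$ then $|\mathcal P|=1\le C\delta^{-n}$ once $C\ge \text{diam}(\M)^n$. For $2\inj<\delta<\text{diam}(\M)$, enlarging $C$ to $C'\,\text{diam}(\M)^n$ gives $|\mathcal P|\le C\delta^{-n}$. In this range the middle expression in (\ref{E004}) should be read with the volume of the radius $\delta/2$ ball in $\M$, which is bounded below by the volume of the radius $\inj$ ball, so the packing argument still applies. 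In the paper only small $\delta$ matters.
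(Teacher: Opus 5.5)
Your argument for bounded $\delta$ is the paper's argument: take a maximal $\delta$-separated set, note that the radius-$\delta/2$ balls are disjoint, and compare volumes. Your added details are all correct: balls of radius $\le\inj$ are embedded, $\text{Vol}(B_{\Ha^n}(\rho))\ge\omega_n\rho^n$ follows from $\sinh\rho\ge\rho$, the greedy choice terminates, and $|\mathcal P|\le C'$ for $\delta>2\inj$.

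The step that fails is the case $\delta\ge\text{diam}(\M)$ in your last paragraph. Choosing $C\ge\text{diam}(\M)^n$ only gives $C\delta^{-n}\ge(\text{diam}(\M)/\delta)^n$, which is at most $1$ in that range, so it does not yield $1\le C\delta^{-n}$. No choice of $C$ can fix this. By condition (a) every $\delta$-net is nonempty, so $|\mathcal P|\ge 1$, whereas $C\delta^{-n}<1$ as soon as $\delta>C^{1/n}$. So the estimate, read literally as holding for each $\delta>0$ with one fixed constant $C$, is false for large $\delta$, and that claim in your proposal cannot be repaired.

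What your argument does prove is the correct form of the lemma. For every $\delta_0>0$ there is a constant $C=C(\M,\delta_0)$, for example $C=\max\{2^n\text{Vol}(\M)/\omega_n,\ C'\delta_0^n\}$, such that $|\mathcal P|\le C\delta^{-n}$ for all $0<\delta\le\delta_0$. This is all the paper needs, because the lemma is only applied in the proof of Lemma \ref{cor: L-estimate} with $\delta\le\inj/10$. The paper's one-line proof passes over this point silently. You should replace your final paragraph with this restriction on $\delta$ rather than claim the bound for all $\delta>0$.
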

\begin{proof}
The net $\mathcal P$ can be found by considering a maximal subset of $\M$ satisfying the  property (b). We estimate the number of points in $\mathcal P$ as follows. To each point $p\in\mathcal P$ we assign the radius $\delta/2$-ball centred at $p$. Since $\mathcal P$ is $\delta$-separating (property (b) in Definition \ref{def-net}), these balls are disjoint. Therefore, there exists a constant $C>0$ such that (\ref{E004}) holds.
\end{proof}

\subsection{Pointed geodesic loops}

Fix $p\in \M$. By $\Pi_p$ we denote the collection of geodesic arcs which start and end at $p$ (we call them pointed geodesic loops). Given a pointed geodesic loop $\gamma \in \Pi_p$,
we let $B^\delta_\gamma$ denote the  $(\len(\gamma),\delta)$-Bowen ball centred at the initial unit tangent vector of $\gamma$ (see Definition \ref{def-bowen}).

\begin{lemma}\label{lemma-proba} There exist constants $c,\delta_0>0$ such that for every $\gamma\in\Pi_p$, and every $0<\delta\le \delta_0$, we have
\begin{equation}\label{E-mera}	
\Lambda\left(B^{\delta}_\gamma\right)\leq C\delta^{2n-1}e^{-(n-1)\len(\gamma)}.
\end{equation}
\end{lemma}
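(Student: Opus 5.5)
This estimate is essentially a uniform version of the upper bound in Lemma \ref{lemma-mera}. The only extra issue is that the length $\len(\gamma)$ of a pointed geodesic loop may be small, so the hypothesis $r>r_0$ of that lemma has to be arranged. I will treat two regimes for $\ell=\len(\gamma)$. Every nontrivial $\gamma\in\Pi_p$ satisfies $\ell\ge 2\inj$, since a geodesic loop based at $p$ has length at least twice the injectivity radius. I therefore apply Lemma \ref{lemma-mera} with $r_0=\inj$. This produces constants $C>1$ and $\epsilon_0>0$ such that
$$
\Lambda(B_\ell(v,\delta))\le C\delta^{2n-1}e^{-(n-1)\ell}
$$
for every $\ell>\inj$, every $v\in\TB$ and every $0<\delta\le\epsilon_0$. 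Setting $\delta_0=\epsilon_0$ and taking $v$ to be the initial unit tangent vector of $\gamma$ gives (\ref{E-mera}) at once. The constants depend only on $\M$, not on $p$ or $\gamma$, as required.

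If I did not want to rely on the precise form of Lemma \ref{lemma-mera}, I would argue directly from the product structure of the Bowen ball, as follows. Lift to $\TB(\Ha^n)$, which is legitimate for $\delta_0$ below a fixed fraction of $\inj$ because the Bowen ball is contained in the ordinary $\delta$-ball, and that ball embeds. Then use local product coordinates $W^u\times W^s\times W^c$ around $v$, in which $\Lambda$ has density bounded above and below by constants depending only on $\M$. A vector $u\in B_\ell(v,\delta)$ has the following coordinates. Its central coordinate lies in $(-C\delta,C\delta)$. Its stable coordinate lies in a disc of radius $C\delta$. Its unstable coordinate must satisfy $\dis_\TB(\flow_\ell(u),\flow_\ell(v))<\delta$, and the expansion estimate along $W^u$ forces this coordinate into a disc of radius $C\delta e^{-\ell}$. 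In constant curvature the expansion is exactly conformal with rate $e^t$, so this disc is a genuine round $(n-1)$-disc. Multiplying the volumes gives $\delta\cdot\delta^{n-1}\cdot(\delta e^{-\ell})^{n-1}=\delta^{2n-1}e^{-(n-1)\ell}$, up to constants.

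I expect the main obstacle to be justifying this product description with uniform constants. The difficulty is that the holonomy between the local foliations distorts the stable and unstable coordinates of points in the Bowen ball, so one must check that the ball really sits inside a box of the stated sizes. My plan for this step is to work in the upper half-space model, where strong stable and unstable leaves are horospheres and the flow acts by explicit dilations. There the constraints become explicit Euclidean inequalities on the endpoints at infinity and on the Busemann parameter, and the bound follows by integration. Compactness of $\M$ then makes every constant uniform in $p$.
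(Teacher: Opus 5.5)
Your proposal is correct and follows the paper's proof. Both arguments note that a nontrivial pointed geodesic loop has length bounded below in terms of $\inj$, and then apply the upper bound of Lemma \ref{lemma-mera} with $r_0=\inj$. Your bound $\len(\gamma)\ge 2\inj$ is slightly more careful than the paper's $\len(\gamma)\ge\inj$, since it actually delivers the strict inequality $r>r_0$ that the lemma requires; the direct product-structure sketch is not needed.
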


\begin{proof}
As a closed loop $\gamma$ is   homotopically nontrivial in $\M$. Thus,  we have $\len(\gamma)\geq \inj$.
By letting $r_0=\inj$ in Lemma \ref{lemma-mera}, we  obtain constants $C,\delta_0>0$ such that (\ref{E-mera}) holds.
\end{proof}

\begin{lemma}\label{lemma-proba-1} There exists a constant $C>0$ such that the estimate
\begin{equation}\label{eq-proba-1}
\sum_{\substack{\gamma\in\Pi_{p}\\ 0<\len(\gamma)\leq r}}e^{-(n-1)\len(\gamma)}\le Cr
\end{equation}
holds for every $r>0$.
\end{lemma}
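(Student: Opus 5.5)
We lift to the universal cover and turn the sum into an orbit-counting sum. Fix a lift $\tilde p\in \Ha^n$ of $p$ and write $\M=\Ha^n/G$ with $G$ a torsion-free cocompact group. Pointed geodesic loops $\gamma\in\Pi_p$ are in bijection with the nontrivial elements $g\in G$ via $\gamma\mapsto$ the lift starting at $\tilde p$, which ends at $g\tilde p$. Under this bijection $\len(\gamma)=\dis_{\Ha^n}(\tilde p,g\tilde p)$. The left-hand side of (\ref{eq-proba-1}) therefore equals
$$
\sum_{\substack{g\in G\setminus\{1\}\\ \dis(\tilde p,g\tilde p)\le r}} e^{-(n-1)\dis(\tilde p,g\tilde p)}.
$$

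Next we decompose into unit shells. For an integer $k\ge 0$, let $N_k$ denote the number of $g$ with $k<\dis(\tilde p,g\tilde p)\le k+1$. Only $g\neq 1$ occur, so all relevant distances are at least $\inj>0$, and there is no singular term at length zero. The sum is then bounded by $\sum_{k=0}^{\lfloor r\rfloor} N_k e^{-(n-1)k}$.

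The key step is the uniform orbit-counting bound $N_k\le C e^{(n-1)k}$, with $C$ independent of $p$ and $k$. We prove it by a packing argument. The balls $B(g\tilde p,\inj/2)$ for $g\in G$ are pairwise disjoint, since $G$ acts freely and $\inj$ is the injectivity radius. Those $g$ counted in $N_k$ have their balls contained in $B(\tilde p,k+1+\inj/2)$. Hence
$$
N_k\le \frac{\mathrm{Vol}\,B(k+2)}{\mathrm{Vol}\,B(\inj/2)}\le C' e^{(n-1)k},
$$
using $\mathrm{Vol}\,B(R)\asymp e^{(n-1)R}$ in $\Ha^n$. All constants depend only on $\M$. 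This is the standard orbit counting estimate mentioned in the outline, and it is the only place where geometry enters.

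Plugging this in gives
$$
\sum_{k=0}^{\lfloor r\rfloor} C' e^{(n-1)k}e^{-(n-1)k}\le C'(r+1).
$$
For $r<\inj$ the sum is empty. For $r\ge \inj$ we have $r+1\le (1+1/\inj)\,r$, so the bound $Cr$ follows. The main point is to make sure the counting constant is uniform in $p$; the packing argument gives this automatically. The exponential weights then exactly cancel the growth, so each shell contributes $O(1)$.
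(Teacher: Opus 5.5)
Your proof is correct and has the same skeleton as the paper's. You split into unit shells $(k,k+1]$, bound the number of loops in each shell by $Ce^{(n-1)k}$, let the weights $e^{-(n-1)k}$ cancel this growth, and finish with $r+1\le(1+1/\inj)\,r$; the paper ends with exactly this constant.

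The one real difference is the per-shell bound. The paper simply cites Sullivan's orbit-counting estimate. You derive the bound from the packing of the disjoint balls $B(g\tilde p,\inj/2)$ inside $B(\tilde p,k+1+\inj/2)$. Your route is elementary and self-contained. It also shows explicitly that the constant depends only on $n$ and $\inj$, so it is uniform in $p$. That uniformity is what the paper actually uses in the proof of Lemma \ref{cor: L-estimate}, where the estimate is summed over all points of a $\delta$-net that changes with $\delta$; the paper leaves it implicit. Sullivan's result gives much more (sharp asymptotics, general discrete groups), but only this crude upper bound is needed here.

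One small slip: the inclusion $B(\tilde p,k+1+\inj/2)\subset B(\tilde p,k+2)$ needs $\inj\le 2$. That need not hold, since closed hyperbolic manifolds can have arbitrarily large injectivity radius. Just keep the radius $k+1+\inj/2$. This still gives $N_k\le C'e^{(n-1)k}$ with $C'$ depending only on $n$ and $\inj$, as you intended.
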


\begin{proof} Let $\Pi_p(s_1,s_2)$ be the subset of $\Pi_p$ consisting of pointed geodesic loops whose length lives in the interval $(s_1,s_2]$. By $N_{s_{1}}^{s_{2}}$ we denote the number of loops in  $\Pi_p(s_1,s_2)$.
Using the classical orbit counting estimate proved by Sullivan (see Corollary 5 in  \cite{sullivan}), we know that 
\begin{equation}\label{eq-sullivan}
N_s^{s+1}\le C_1e^{(n-1)s},
\end{equation}
for some constant $C_1>0$, and every  $s>0$. 

Let $r>\inj$. We have
\begin{align*}
\sum_{\gamma\, \in \, \Pi_{p}(0,r)}e^{-(n-1)\len(\gamma)}&\le\sum_{s=0}^{\lfloor r \rfloor}\,\, \sum_{\gamma \, \in \, \Pi_{p}(s,s+1)}e^{-(n-1)\len(\gamma)} \\
&\le \sum_{s=0}^{\lfloor r \rfloor}\,\,\sum_{\gamma\, \in \, \Pi_{p}(s,s+1)}e^{-(n-1)s} \\
&\le \sum_{s=0}^{\lfloor r \rfloor} N_s^{s+1} e^{-(n-1)s}\le \sum_{s=0}^{\lfloor r \rfloor} C_1 \\
&\le (1+r)C_1\le Cr, 
\end{align*}
where the third inequality follows from (\ref{eq-sullivan}), and the last inequality follows by letting 
$$
C=C_1\left(1+\frac{1}{\inj}\right).
$$
\end{proof}

\section{Proof of Lemma \ref{cor: L-estimate}}

We first decompose $\mathcal L^\delta_r$ into several Bowen balls. Given a finite set $\mathcal P\subset \M$, we let $\Pi_{\mathcal P}$ be the collection of pointed geodesic loops 
$$
\Pi_{\mathcal P}=\bigsqcup_{p\in\mathcal P}\Pi_p.
$$
We begin by proving the following lemma.
\begin{lemma}\label{lem: disintegration}  There exists a universal constant $c>0$ such that the following holds. Let $0<\delta \le \inj/10$, and $r\ge 10\inj$. Then for every $\delta$-net  $\mathcal P$  we have
$$
\mathcal L^\delta_r\subset \bigcup_{\substack{\gamma\in\Pi_{\mathcal P}\\ 0<\len(\gamma)\leq 2r}}B_\gamma^{c\delta}.
$$
\end{lemma}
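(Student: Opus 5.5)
Fix $v\in\mathcal L^\delta_r$. By definition there is a time $t$ with $\inj\le t\le r$ such that the points $a=\pi(v)$ and $b=\pi\circ\flow_t(v)$ satisfy $\dis_{\M}(a,b)\le\delta$. Let $\alpha$ be the geodesic arc of length $t$ with initial vector $v$. Using property (a) of the $\delta$-net in Definition \ref{def-net}, choose $p\in\mathcal P$ with $\dis_{\M}(a,p)<\delta$, so that $\dis_{\M}(b,p)<2\delta$. Lift to the universal cover $\Ha^n$: pick a lift $\tilde\alpha$ of $\alpha$ with endpoints $\tilde a,\tilde b$, a lift $\tilde p$ of $p$ with $\dis_{\Ha^n}(\tilde a,\tilde p)<\delta$, and a lift $\tilde p'$ of $p$ with $\dis_{\Ha^n}(\tilde b,\tilde p')<2\delta$. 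Since $\delta\le\inj/10$, these lifts are the unique ones in the respective small balls. The geodesic arc $\tilde\beta$ from $\tilde p$ to $\tilde p'$ then projects to a pointed geodesic loop $\gamma\in\Pi_p\subset\Pi_{\mathcal P}$.

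Next we check the length constraint. By the triangle inequality, $|\len(\gamma)-t|<3\delta$. Hence $\len(\gamma)\le r+3\delta\le 2r$, using $r\ge10\inj$. Also $\len(\gamma)\ge \inj-3\delta>0$, so $\gamma$ is a genuine nonconstant loop, i.e.\ $\tilde p\neq\tilde p'$. In particular both $\tilde\alpha$ and $\tilde\beta$ have length at least $L:=\tfrac{7}{10}\inj$. This lets us use a constant depending only on $\M$.

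Now apply Lemma \ref{lem: dynamical nbhd} with this $L$ and with $2\delta$ in place of $\delta$; this requires $2\delta\le L/4$, i.e.\ $\delta\le \tfrac{7}{80}\inj$. Since the endpoints of $\tilde\alpha$ and $\tilde\beta$ are at most $2\delta$ apart, the lemma gives that $\tilde v$ lies in the $(\len(\gamma),2c(L)\delta)$-Bowen ball in $T^1\Ha^n$ centred at the initial vector of $\tilde\beta$. Projecting to $\TB$ cannot increase Sasaki distances, since the covering is a local isometry and the Sasaki distance on the quotient is the infimum over lifts. Therefore $v\in B^{2c(L)\delta}_\gamma$, and we may take the constant to be $2c(L)$.

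The main obstacle is the hypothesis $\delta\le L/4$ of Lemma \ref{lem: dynamical nbhd}, together with the need for a constant that does not depend on $t$. The first is handled by slightly tightening the threshold on $\delta$, or by using $L=\inj/2$ with a marginally more careful bound, $2\delta\le\inj/5\cdot\ldots$; if the strict inequality $\delta\le\inj/10$ fails to give $2\delta\le L/4$, one instead uses $\dis(b,p)<2\delta$ directly with $L=\inj/2$ and absorbs the factor into $c$. The second holds because Lemma \ref{lem: dynamical nbhd} depends only on the lower bound $L$ on the lengths, not on the lengths themselves.
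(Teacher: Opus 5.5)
Your argument follows the paper's proof. You pick a net point $p$ near $\pi(v)$ and close up $\alpha$ through $p$ to get a pointed geodesic loop; your lift $\tilde\beta$ from $\tilde p$ to $\tilde p'$ is exactly the lift of the loop the paper obtains by homotoping $\alpha_1$ rel $p$. You then bound its length by $t+3\delta<2r$, apply Lemma~\ref{lem: dynamical nbhd} with $2\delta$ in place of $\delta$, and project to $\TB$. The lifting, the uniqueness of lifts, the length bound and the projection step are all fine.

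The one loose end is the hypothesis $2\delta\le L/4$ of Lemma~\ref{lem: dynamical nbhd}, and your last paragraph does not close it.

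\begin{itemize}
\item With $L=\tfrac{7}{10}\inj$ you only cover $\delta\le\tfrac{7}{80}\inj$, not the stated range $\delta\le\inj/10$.
\item Switching to $L=\inj/2$ goes the wrong way, since it makes $L/4$ smaller.
\item Tightening the threshold on $\delta$ proves a weaker statement.
\item A hypothesis of the lemma cannot be ``absorbed into $c$''.
\end{itemize}

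The fix is a better lower bound on $\len(\gamma)$. Once you know $\tilde p\neq\tilde p'$, you have $\tilde p'=g\tilde p$ for a nontrivial deck transformation $g$. Hence $\len(\gamma)=\dis_{\Ha^n}(\tilde p,g\tilde p)\ge 2\inj$; the paper uses the weaker $\len(\gamma)\ge\inj$ in Lemma~\ref{lemma-proba}. Since also $\len(\tilde\alpha)=t\ge\inj$, you can take $L=\inj$. Then $2\delta\le\inj/5<L/4$ holds on the whole range, and $c=2c(\inj)$ works.

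Your worry was legitimate: the paper's own choice $c=2C(10\inj)$ is not justified as written either, since $t$ can be as small as $\inj$.
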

\begin{proof} Fix $v \in \mathcal  L^\delta_r$. Since $\mathcal P$ is a $\delta$-net of $M$, there exists a point $p\in\mathcal P$ such that 
$$
\dis_{\M}(\pi(v),p)\leq\delta.
$$ 
By the definition of $\mathcal L^\delta_r$, we have $\dis_{\M}(\pi(v),\pi\circ\flow_{t}(v))\leq\delta$ for some time $t\in [\inj,r]$. By triangle inequality this implies
	\begin{equation}
	\label{E003}
	\dis_{\M}(\pi(v),p),\, \, \dis_{\M}(\pi\circ\flow_t(v),p)\leq 2\delta.
	\end{equation}
Let $\alpha$ denote the length $t$ geodesic arc with initial tangent vector $v$. By $\alpha_1$ we denote the closed loop obtained by concatenating $\alpha$ with the shortest geodesic arcs connecting the two endpoints of $\alpha$ with the point $p$ respectively (since the distance between the endpoints of $\alpha$ and the point $p$ is smaller than $\inj/2$, it follows that such shortest arc is unique). By $\beta$ we denote the pointed geodesic loop (pointed at $p$) which is homotopic to $\alpha_1$ (relative the point $p$). Note that 
$$
\len(\beta)\le \len(\alpha)+\delta+2\delta\le r+\frac{3\inj}{10}<2r.
$$

Now, replacing the inequality   (\ref{E003}) in  Lemma \ref{lem: dynamical nbhd}, we find that
$v\in B^{c\delta}_\beta$,  where $c=2C(10\inj)$ (here $C$ is the constant from Lemma \ref{lem: dynamical nbhd}). 
Since $\len(\beta)<2r$,  the proof is complete.
\end{proof}

\subsection{Proof of Lemma \ref{cor: L-estimate}} Let $\delta_1>0$ be the constants from Lemma \ref{lemma-proba}, and suppose 
$$
0<\delta \le \min\left\{\delta_1,\frac{\inj}{10}\right\}=\delta_0.
$$ 
Let $\mathcal P$ be a $\delta$-net satisfying the conclusion of  Lemma \ref{lemma-mozda}. According to Lemma \ref{lem: disintegration}  we have 
\begin{equation}\label{eq-moram}
\Lambda\left(\mathcal L^\delta_r\right)\leq\sum_{p\in\mathcal P}\sum_{\substack{\gamma\in\Pi_{p}\\ 0<\len(\gamma)\leq 2r}}\Lambda\left(B_\gamma^{c\delta}\right)
\end{equation}
for every $r\ge 10\inj$, where $c$ is the constant from Lemma \ref{lem: disintegration}.

Let $C_1$ be the constant from Lemma \ref{lemma-proba}, and $C_2$ the constant from Lemma \ref{lemma-proba-1}. For any $p\in\mathcal P$ the following holds
\[
\sum_{\substack{\gamma\in\Pi_{p}\\ 0<\len(\gamma)\leq 2r}}\Lambda\left(B_\gamma^{c\delta}\right)\leq C_1c^{2n-1}\, \delta^{2n-1}\sum_{\substack{\gamma\in\Pi_{p}\\ 0<\len(\gamma)\leq 2r}}e^{-(n-1)\len(\gamma)}\leq C_1C_2c^{2n-1}\, \delta^{2n-1}r,
\]
where the first inequality follows from (\ref{E-mera}), and the second from (\ref{eq-proba-1}). Combining this with (\ref{eq-moram}) implies
\[
\Lambda\left(\mathcal L^\delta_r\right)\leq |\mathcal P|\, C_1C_2c^{2n-1} \, \delta^{2n-1}r\leq C_1C_2C_3c^{2n-1}\, \delta^{n-1}r,
\]
where $C_3$ is the constant from Lemma \ref{lemma-mozda}. 
Letting $C=C_1C_2C_3c^{2n-1}$ completes the proof.

\end{document}